\newcommand{\C}{{\mathbb C}}
\newcommand{\R}{{\mathbb R}}
\newcommand{\Z}{{\mathbb Z}}
\newcommand{\eps}{{\epsilon}}
\renewcommand{\phi}{\varphi}  
\newtheorem{theorem}{Theorem}
\newtheorem{lemma}[theorem]{Lemma}
\newtheorem{corollary}[theorem]{Corollary}
\newtheorem{conjecture}[theorem]{Conjecture}
\theoremstyle{remark}
\newtheorem{remark}[theorem]{Remark}
\theoremstyle{definition}
\begin{document}
\title{The Bombieri--Pila determinant method}
\author{Thomas F. Bloom and Jared Duker Lichtman}

\address{Mathematical Institute, University of Oxford, Oxford, OX2 6GG, UK}

\email{bloom@maths.ox.ac.uk}

\address{Department of Mathematics, Stanford University, Stanford, CA, USA}

\email{jared.d.lichtman@gmail.com}

\maketitle

\begin{abstract}
We give a concise and accessible introduction to the real-analytic determinant method for counting integral points on algebraic curves, based on the classic 1989 paper of Bombieri and Pila.
\end{abstract}

\section{Introduction}

Consider the problem of counting the number of integral points on an algebraic curve in a box; that is, solutions $(x,y)\in\{1,\ldots,N\}^2$ to the equation $F(x,y)=0$ for an irreducible polynomial $F\in \R[x,y]$. In breakthrough work in Diophantine geometry, Bombieri and Pila \cite{BP89} obtained an essentially sharp quantitative upper bound for this problem.

\begin{theorem}[Bombieri--Pila \cite{BP89}, Pila \cite{Pi96}] \label{thm:bp}
Let $F\in\mathbb{R}[x,y]$ be an irreducible polynomial of degree $d\ge2$. If $N$ is sufficiently large, depending only on $d$, then
\[\lvert \{(x,y) \in \{1,\ldots,N\}^2 :F(x,y)=0\}\rvert \le (\log N)^{O(d)}N^{\frac{1}{d}}.\]
\end{theorem}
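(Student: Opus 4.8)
The plan is to use the real-analytic determinant method of Bombieri--Pila. The central idea is that if many integral points $(x_i, y_i)$ all lie on the curve $F=0$ and also happen to lie on a short arc where $F$ looks like a graph $y = f(x)$ with $f$ sufficiently smooth, then one can force these points to satisfy an auxiliary polynomial equation of controlled degree. Concretely, fix a parameter $D$ (to be optimized as a small power of $\log N$) and let $\mathcal{M} = \binom{D+2}{2}$ be the number of monomials $x^a y^b$ with $a+b \le D$. Given any $\mathcal{M}$ points on the curve, form the $\mathcal{M} \times \mathcal{M}$ matrix whose rows are the monomial evaluations $(x_i^{a} y_i^{b})_{a+b\le D}$; if the determinant is nonzero, it is a nonzero integer, hence has absolute value at least $1$. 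The strategy is to show that for points lying on a sufficiently short subarc the determinant is strictly smaller than $1$ in absolute value, a contradiction, so the determinant must vanish — meaning all $\mathcal{M}$ points lie on an auxiliary curve $G = 0$ of degree $\le D$.

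The key steps, in order, are as follows. First, partition the box $\{1,\ldots,N\}^2$ by covering the real points of $F=0$ in the box by $O_d(N^{1/2})$ (or so) arcs on each of which, after possibly swapping the roles of $x$ and $y$, the curve is the graph of a function $f$ that is $C^\infty$ with derivatives of all relevant orders bounded in terms of $d$; this uses the implicit function theorem together with a bound on the number of points where $\partial F/\partial y$ vanishes (which is $O(d^2)$ by Bézout). Second, on each such arc of length $L$, estimate the determinant: expanding each entry $x_i^a y_i^b = x_i^a f(x_i)^b$ in a Taylor series around a base point and using the classical Schwarz/Vandermonde-type bound for such determinants, one gets $|\det| \ll_D L^{E}$ where the exponent $E = E(D)$ grows like $D^2/\log D$ (the total ``order of vanishing'' one can extract, reflecting that one can subtract off the first $\mathcal{M}$ Taylor coefficients). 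Third, choose $L = N^{-\theta}$ so that this beats the trivial upper bound $|\det| \ll_D N^{O(D)}$ coming from the entries being at most $N^D$; balancing gives $\theta$ of size roughly $1/D$ and yields $|\det| < 1$, forcing vanishing. Fourth, having shown that the points on each short arc lie on an auxiliary curve $G=0$ of degree $\le D$, invoke Bézout's theorem: since $F$ is irreducible of degree $d$ and $\deg G \le D$, either $F \mid G$ (impossible if $D < d$, or handled by a descent/factoring argument for general $D$) or $F = 0$ and $G = 0$ share at most $dD$ points. Summing $dD \cdot (\text{number of arcs})$ over all the short arcs needed to cover the curve, and optimizing $D \asymp \log N$, produces the bound $(\log N)^{O(d)} N^{1/d}$.

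The main obstacle I expect is the determinant estimate in the second step — making precise how large an order of vanishing one can extract, i.e. getting the exponent $E(D) \gg D^2/\log D$ rather than merely $E(D) \gg D$. The naive bound treats each monomial $x^a f(x)^b$ independently and only sees vanishing to order $\mathcal{M} \asymp D^2$ summed trivially, but the sharp bound comes from ordering the monomials cleverly (by total degree, or by the order of vanishing of $x^a f(x)^b - (\text{lower terms})$) and performing row reduction so that the $k$-th row, after subtracting combinations of earlier rows, vanishes to order about $k$; the total order of vanishing is then $\sum_{k \le \mathcal{M}} (\text{something like } \sqrt{k})$, which is where the $D^2/\log D$ improvement over the trivial $D$ ultimately enters via Stirling-type estimates on $\mathcal{M}$ and the degree distribution. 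Controlling the implied constants here — ensuring they depend only on $d$ and $D$ and not on the coefficients of $F$, which is the whole point of the theorem — requires that the bounds on the derivatives of $f$ on each arc be uniform, which in turn is the reason the arcs must be chosen carefully and why ``$N$ sufficiently large depending on $d$'' appears in the hypothesis. A secondary technical point is handling the transition between the $y=f(x)$ and $x=f(y)$ regimes and the behaviour near singular points and vertical tangents, but these contribute only $O_d(1)$ extra points and arcs and so are absorbed into the error.
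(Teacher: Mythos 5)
Your outline captures the broad shape of the argument --- monomial matrix, ``integer determinant $\ge 1$'' contradiction, B\'ezout to cap the number of points on each auxiliary curve, auxiliary degree $\asymp\log N$ --- but two of your steps have genuine gaps. The most serious is the claim that the curve can be cut into arcs on which the implicit function $f$ has ``derivatives of all relevant orders bounded in terms of $d$.'' This is false: the derivatives of $f$ are rational expressions in $(x,f(x))$ and the \emph{coefficients} of $F$, so no bounded collection of arcs makes them uniformly $O_d(1)$. The uniformity in $F$ --- which, as you rightly say, is the whole point --- is instead obtained by a counting-and-recursion argument that is absent from your sketch. Lemma~\ref{lem:5} applies B\'ezout to an inductively constructed auxiliary polynomial $H_k$ to show that each level set $f^{(k)}(x)=c$ has only $O(kd^2)$ solutions; Lemma~\ref{lem:6} uses this to split $I$ into $O(k^2d^2)$ subintervals on each of which $|f^{(i)}(x)/i!|$ is either everywhere $\le N\delta^i$ or everywhere $\ge N\delta^i$; the determinant method (Lemma~\ref{lem-intbound1}) is applied on the small-derivative subintervals, while Lemma~\ref{lem:7} shows the large-derivative subintervals have length $\le 2/\delta$, and these are handled by a \emph{recursion} $G(N)\le K^{O(d)}N^{1/d+O(1/\ell)}+KG(K^{-2d}N)$ which is a structural feature of the proof, not a detail to be absorbed.

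The second gap is your treatment of $F\mid G$. Since the auxiliary degree must be $\asymp\log N\gg d$, the case ``degree $<d$'' never arises, and ``handled by a descent/factoring argument for general $D$'' hides a key design decision. If you take $\mathcal{M}$ to be \emph{all} monomials of degree at most $\ell$, as you propose, the auxiliary curve $G$ produced by the determinant argument can perfectly well be a multiple of $F$, in which case B\'ezout tells you nothing. The paper instead (proof of Lemma~\ref{lem-intbound1}) restricts $\mathcal{M}$ to monomials of degree between $d$ and $\ell$ that are \emph{not} divisible by a fixed leading monomial $x^{d-i_F}y^{i_F}$ of $F$; comparing leading terms then shows $F\nmid G$, and because this $\mathcal{M}$ contains exactly $d$ monomials in each degree, the exponent $(p+q)/\binom{|\mathcal{M}|}{2}$ works out to $1/d+O(1/\ell)$, which is exactly where the $N^{1/d}$ in the theorem comes from. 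One further arithmetical correction: the order of vanishing is not ``$\gg D^2/\log D$''; row reduction so that the $k$-th row vanishes to order $k-1$ gives total order $\binom{|\mathcal{M}|}{2}$ with no logarithmic loss, and the $(\log N)^{O(d)}$ in the statement comes only from the choice $\ell\asymp\log N$ and the recursion constant $K=\ell^{O(1)}$, not from the determinant estimate itself.
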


A particularly striking feature of the bound is that the implicit constants depend only on $d=\deg F$, not on the coefficients of $F$ themselves (which may be arbitrarily large).

Bombieri and Pila's original method \cite{BP89} gave a bound of the shape $N^{1/d+o(1)}$ with a weaker explicit form for $N^{o(1)}$. Pila \cite{Pi96} refined the method to yield the stronger bound $(\log N)^{O(d)}N^{1/d}$.
It is possible that the strong bound $O(N^{1/d})$ may hold. If so, such a bound is best possible, as witnessed by the simple example $x=y^d$.

In this note we give an accessible proof of Theorem \ref{thm:bp}. We follow \cite{BP89} and \cite{Pi96}, but simplify the presentation for the sake of clarity and to bring the key features of the method to the fore. This note is essentially self-contained, appropriate for an undergraduate level reader with basic familiarity with calculus (of a single variable) and linear algebra. The only results cited without proof are B\'ezout's theorem and the implicit function theorem.

\subsection{Features and scope of the exposition}\label{subsec:scope}
Our primary aim is to present the key ideas of the Bombieri--Pila real-analytic determinant method, in the setting of curves. We will not discuss the ideas behind subsequent works, such as the $p$-adic determinant method of Heath-Brown \cite{HB02} and a global version of Salberger \cite{Sa23}. The $p$-adic method is stronger than its real-analytic predecessor in several respects, for example in its treatment of singularities. However the real-analytic determinant method does retain some advantages, for instance to count points on transcendental (higher dimensional) varieties (see for example the work of Pila and Wilkie \cite{PW06}), or for problems of a more Archimedean nature, such as counting points `near' (i.e. using the Archimedean metric) curves and higher dimensional varieties. We give an alternative application along these lines, to counting integral points on convex curves, in Section~\ref{convex}.

To appreciate the bounds under study, we stress two key features: Firstly, the determinant method obtains strong bounds when the degree $d$ is large compared to the number of variables $n$. In particular, as in \cite{BP89} and some of \cite{HB02}, the bounds improve as the degree $d$ increases. Whereas in the opposite regime, when the degree $d$ is fixed and the number of variables $n$ sufficiently large (depending on $d$), the determinant method yields weaker results as compared to the circle method, for example (as shown in spectacular fashion by Birch \cite{Bir57}). 

We give a conceptual heuristic for this (which we outline more precisely in Section \ref{sec:detmethod}): When $n=2$, the Bombieri--Pila determinant method essentially constructs a vector space of polynomials $F\in\Z[x,y]$, with dimension growing in $d$. The goal is then to find one such polynomial which satisfies certain properties, including to vanish on the integer points of our given curve. So as $d$ grows, we have increasing degrees of freedom to construct our desired polynomial. More generally, when $n\ge3$, Heath-Brown constructs a certain vector space of polynomials $F\in\Z[x_1,\ldots,x_n]$. This turns out to work well when $(n-1)/d^{1/(n-2)}$ is small, in particular when $d$ is large compared to $n$.

Secondly, the bounds under study are uniform in $F$. In particular, such uniformity is essential for applications to higher dimensional varieties, since then one may induct on the dimension  via  `slicing' arguments, applying uniform bounds at each successive dimension. In \cite{HB02}, Heath-Brown introduced a post-hoc trick to obtain uniformity, which has recently been used in other contexts (c.f. \cite{BoB20}, \cite{BoP22}). If one does not care about uniformity in $F$, then often alternative methods perform far better (for example, if the points correspond to projective points on a curve of genus $\geq 2$, then Faltings' theorem \cite{Fal83} implies that there are only finitely many integral points).

We hope this note will foster a wider understanding of the determinant method, with large potential to spur further applications. Just in 2024, the determinant method has been applied by Greenfeld, Iliopoulou, Peluse \cite{GIP24} (via \cite{CCDN20}) to bound integer distance sets, and by Browning, Lichtman, Ter\"av\"ainen \cite{BLT24} (via \cite{BP89} and \cite{HB02}) to bound the exceptional set in the $abc$ conjecture. Such recent examples highlight the timeliness of our exposition.

\subsection{Further work} 
To offer a bit of broader context, in this section we will very briefly highlight some related results involving counting integral points and the determinant method. Since our primary goal is to give a short, elementary exposition of the Bombieri--Pila method, we will not attempt by any means to give a complete survey of the literature.

Let us introduce some convenient notation: for any $n,d\geq 2$, we write
\[X_{n,d}^{\mathbb{A}}(N) := \sup_{F}\left\lvert \{ \mathbf{x}=(x_1,\ldots,x_n)\in \{1,\ldots,N\}^{n} : F(\mathbf{x})=0\}\right\rvert,\]
where the supremum ranges over all irreducible $F\in \mathbb{Q}[x_1,\ldots,x_n]$ of degree $d$.
In this notation, the estimate in Theorem \ref{thm:bp} of Bombieri--Pila gives
\[X_{2,d}^{\mathbb{A}}(N) \ll_d N^{\frac{1}{d}+o(1)}.\]
Importantly, the implied constants depend only on the degree $d$, but are independent of $N$.
Pila \cite{Pi95} extended this result to higher dimensions by a slicing argument, showing for any $n\ge2$,
\begin{align}\label{eq:Pi95}
X_{n,d}^{\mathbb{A}}(N) \ll_{n,d} N^{n-2+\frac{1}{d}+o(1)}.
\end{align}
Here the implied constants depend only on $n,d$, but not on $N$.
Again, the example $x_1=x_2^d$ shows that this exponent is the best possible.

Most of the subsequent progress has occurred in the projective setting, where the aim is to provide bounds for
\begin{align*}
X_{n,d}^{\mathbb{P}}(N) = \sup_{F}\lvert \{ \mathbf{x}=(x_0,\ldots,x_n)\in \{1,\ldots,N\}^{n+1} :  \gcd(\mathbf{x})=1, \ F(\mathbf{x})=0 \}\rvert.
\end{align*}
Here the supremum ranges over all \emph{homogeneous} irreducible $F\in \mathbb{Q}[x_0,\ldots,x_n]$ of degree $d$. Note that \eqref{eq:Pi95} immediately implies $X_{n,d}^{\mathbb{P}}(N) \ll N^{n-1+\frac{1}{d}+o(1)}$. 

Many results in the projective setting use a $p$-adic variant of the determinant method, developed by Heath-Brown \cite{HB02}, which is inspired by the real-analytic method of Bombieri--Pila presented here. As mentioned above, since our focus is on the real-analytic method we will not discuss Heath-Brown's $p$-adic determinant method (and its subsequent generalisations) here. We restrict ourselves to very briefly highlight the state of the art.

Notably, in \cite{HB02} Heath-Brown obtained a variety of estimates, including
\begin{align*}
X_{2,d}^{\mathbb{P}}(N) \ll_d N^{\frac{2}{d}+o(1)} 
\qquad \text{and}\qquad
X_{3,d}^{\mathbb{P}}(N) \ll_d N^{2+o(1)}.
\end{align*}
This result is sharp, as the example $F(x_0,x_1,x_2,x_3)=x_0^d+x_1^d-x_2^d-x_3^d$ shows. However, if we remove the `trivial' solutions, then more can be said. In particular, if we remove the points that lie on any lines on the surface $F=0$ then Heath-Brown improves this estimate to
$\ll_d N^{1+\frac{3}{\sqrt{d}}+o(1)},$
with an even stronger result if $F$ is assumed to be non-singular.

Building on the global $p$-adic determinant method of Salberger \cite{Sa23} and others, Walsh \cite{Wa15} removed the $N^{o(1)}$ factor, proving
\begin{align}\label{eq:Wa15}
X_{2,d}^{\mathbb{P}}(N) \ll_d N^{\frac{2}{d}},
\end{align}
which is sharp for $d$ fixed.
For explicit dependence on $d$, Castryck, Cluckers, Dittmann, Nguyen \cite{CCDN20} obtained
\begin{align}
X_{2,d}^{\mathbb{P}}(N) &\ll d^4 N^{\frac{2}{d}},\\
X_{2,d}^{\mathbb{A}}(N) &\ll d^3(d + \log N) N^{\frac{1}{d}} \nonumber.
\end{align}
Recently Binyamini, Cluckers, and Novikov \cite{BCN23} used real-analytic methods to prove
\begin{align}\label{eq:BCN23}
X_{2,d}^{\mathbb{P}}(N) &\ll d^2 (\log N)^{O(1)} N^{\frac{2}{d}},\\
X_{2,d}^{\mathbb{A}}(N) &\ll d^2 (\log N)^{O(1)} N^{\frac{1}{d}} \nonumber,
\end{align}
which they show is sharp in $N$ and $d$, up to the $(\log N)^{O(1)}$ factors. Notably, in \eqref{eq:BCN23} the implicit constants are absolute, and so the bound \eqref{eq:BCN23} is preferred to \eqref{eq:Wa15} when $d$ is large compared to $N$.

In higher dimensions $n\ge3$, recall \eqref{eq:Pi95} implies that $X_{n,d}^{\mathbb{P}}(N) \ll_n N^{n-1+\frac{1}{d}+o(1)}$. It is believed that, uniformly for all $d\geq 2$,
\begin{align}\label{eq:UDGC}
X_{n,d}^{\mathbb{P}}(N) \ll_{n,d} N^{n-1+o(1)}.
\end{align}
Heath-Brown's work establishes this for $n=3$, and furthermore the elementary theory of quadratic forms may establish this bound for $d=2$ and all $n$. Interestingly, Marmon \cite{Ma10} has shown that an extension of the real-analytic method of Bombieri--Pila can be used to recover the main results of Heath-Brown, rather than the $p$-adic method employed there.

Further, \eqref{eq:UDGC} is a case of the uniform dimension growth conjecture, attributed to Heath-Brown \cite{HB02}, which posits a bound of $O_{n,d}(N^{\dim X+o(1)})$ for any integral projective variety $X\subset \mathbb P^n$ of degree $d$. This was resolved for $d\ge4$ in \cite{Sa23}; also see \cite{BHBS06}. (In fact, \cite{Sa23} fully resolves the `non-uniform' dimension growth conjecture for all $n,d\ge3$, where the implied constant may depend on $F$). Again, one may ask whether the factor of $N^{o(1)}$ in \eqref{eq:UDGC} may be sharpened, or simply removed. 
This was achieved in \cite{CDHNV23}, showing for $d\ge5$,
\begin{align*}
X_{n,d}^{\mathbb{P}}(N) \ll_n d^7 N^{n-1}.
\end{align*}
Note for $d=2$, at least a factor of $(\log N)$ is required, as the example $F(x_0,x_1,x_2,x_3) = x_0x_1 - x_2x_3$ shows, see Serre \cite[p.178]{Se89}.

As mentioned in Section \ref{subsec:scope}, determinant methods work best when $d$ is larger, and by their nature are less suited to smaller $d$ as compared to other approaches \cite{Bir57}. In particular, when $d=3$ the best known bound is $X_{n,3}^{\mathbb{P}}(N) \ll_n N^{n-1+1/7+o(1)}$ from \cite{Sa15}. As such, it is a key question for contemporary analytic number theory to remove the factor $N^{1/7}$, so to obtain the conjectured bound \eqref{eq:UDGC} when $d=3$.
All these results in turn are (type I) cases of Serre's conjecture for thin sets, see \cite{Se89,Se92} for further details. Finally, we mention that the determinant method may be viewed as an example of `polynomial methods.' See \cite{Hau17} for an introductory survey of this much broader topic, which has seen wide applications.

\subsection*{Acknowledgements}

We would like to thank Tim Browning and Roger Heath-Brown for useful conversations, as well as Raf Cluckers and the anonymous referees for helpful feedback. The first author is supported by a Royal Society University Research Fellowship. The second author is supported by an NSF Mathematical Sciences Postdoctoral Research Fellowship.

\section{Covering integer points by curves}
In this section we present the key ingredient of the proof of Theorem~\ref{thm:bp}. The main idea is that, given some smooth function $f$ which has rapidly decaying derivatives, we can cover the integer points on the graph $(x,f(x))$ by a small number of curves drawn from a certain specified set. This quickly leads to efficient upper bounds for the number of such points via an application of B\'ezout's theorem.

Let $\mathcal{M}\subseteq \mathbb{R}[x,y]$ be a finite set of monomials. We write $\langle \mathcal{M}\rangle$ for the span of $\mathcal{M}$, that is, those polynomials in $\mathbb{R}[x,y]$ whose monomials are all in $\mathcal{M}$. We will sometimes abuse notation and write $\mathbf{j}=(j_1,j_2)\in \mathcal{M}$ to mean $x^{j_1}y^{j_2}\in \mathcal{M}$. Let 
\begin{align}\label{def:pq}
p=p_{\mathcal{M}}:=\sum_{x^{j_1}y^{j_2}\in \mathcal{M}}j_1\qquad\text{ and }\qquad
q=q_{\mathcal{M}}:=\sum_{x^{j_1}y^{j_2}\in \mathcal{M}}j_2.
\end{align}
The main example to keep in mind is the case when $\mathcal M = \{x^{j_1}y^{j_2} : j_1+j_2 \le d\}$, the set of all monomials of degree at most $d$. In this case we have
$D = |\mathcal M| 
= \frac{1}{2}(d+1)(d+2)$
and
$p=q 
= \frac{1}{3}dD$, and $\langle \mathcal{M}\rangle$ is simply the set of polynomials with degree at most $d$. The need to work with the more general situation is because we will apply B\'ezout's theorem to count integer points on some curve $F(x,y)=0$, and therefore need to make sure that we are not constructing some $G\in \mathbb{R}[x,y]$ with $F\mid G$. The reader may like to look ahead to Section~\ref{sec-app} to see how to choose $\mathcal{M}$ to avoid this, but on first reading they should just take $\mathcal{M}$ to be all monomials of degree at most $d$.

The driving force of the Bombieri--Pila determinant method is the following lemma, which states that if a function $f$ has rapidly decaying derivatives then the integer points on any sufficiently short segment of the graph $(x,f(x))$ can be covered by a single curve in $\langle \mathcal{M}\rangle$. If one does not care about the quantitative aspects, some result like this is trivial, since any $D-1$ points are contained in a curve in $\langle \mathcal{M}\rangle$ by linear algebra. For sufficiently smooth functions, however, the following bound is far superior.
\begin{lemma}[Bombieri--Pila]\label{lem:ptspacing}
Let $\mathcal{M}$ be a finite set of monomials of size $D=\lvert \mathcal{M}\rvert$ and let $p,q$ be defined as in \eqref{def:pq}. Let $I\subseteq [0,N]$ be a closed interval and $f\in C^{D-1}(I)$. Suppose that $X>0$ and $\delta\geq 1/N$ are such that for all $0\leq i<D$ and $x\in I$
\[\left\lvert \frac{f^{(i)}(x)}{i!}\right\rvert\leq X\delta^i.\]
If
\begin{align*}
\lvert I\rvert < \tfrac{1}{4}\delta^{-1}\big((2N)^p(DX)^q\big)^{-1/\binom{D}{2}}
\end{align*}
then $\{(x,f(x)) : x\in I\}\cap \Z^2$ is contained in some curve in $\langle \mathcal{M}\rangle$ (that is, the zero set of some $F\in \langle \mathcal{M}\rangle$). 
\end{lemma}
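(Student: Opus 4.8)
The plan is to build a Vandermonde-type matrix out of the candidate monomials evaluated at the integer points, and to show that if there are $D$ or more integer points on the segment then the determinant of this matrix must vanish; an identically zero determinant means the corresponding linear combination of monomials (with coefficients given by the cofactors) is a nonzero polynomial in $\langle\mathcal M\rangle$ vanishing at all the integer points, which is exactly the curve we seek. So suppose for contradiction that there are at least $D$ integer points $(x_1,y_1),\ldots,(x_D,y_D)$ on $\{(x,f(x)):x\in I\}$, say with $x_1<\cdots<x_D$ and $y_k=f(x_k)$. Form the $D\times D$ matrix $\Delta$ whose $(k,\mathbf j)$ entry is $x_k^{j_1}f(x_k)^{j_2}$ as $\mathbf j=(j_1,j_2)$ ranges over $\mathcal M$. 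If $\det\Delta\ne 0$ we derive a contradiction; if $\det\Delta=0$ then some nonzero integer cofactor vector gives the desired $F\in\langle\mathcal M\rangle$.

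The heart of the argument is a two-sided estimate on $|\det\Delta|$ under the assumption $\det\Delta\ne0$. For the lower bound: since all entries are integers and the determinant is a nonzero integer, $|\det\Delta|\ge 1$. For the upper bound: each column of $\Delta$ is the vector $(g(x_k))_k$ for a fixed monomial function $g(x)=x^{j_1}f(x)^{j_2}$, so I want to Taylor-expand and exploit that the $x_k$ are clustered in a short interval. Concretely, I will use the standard fact that a determinant whose columns are $(g_m(x_k))_{k}$ can be rewritten, via column operations turning the monomial basis in $x$ into divided differences, as (a nonzero constant times) $\prod_{k<l}(x_l-x_k)$ times a determinant whose $(k,m)$ entry involves $g_m^{((k-1))}(\xi)/(k-1)!$ at intermediate points $\xi\in I$ — the classical confluent/Hermite--Vandermonde manipulation. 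This produces a factor $\prod_{k<l}(x_l-x_k)$, which is at most $|I|^{\binom D2}$ in absolute value, and a remaining determinant that I bound by Hadamard's inequality after estimating the size of each $\frac{1}{i!}\frac{d^i}{dx^i}\big(x^{j_1}f(x)^{j_2}\big)$ on $I$.

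The key derivative estimate is: for $\mathbf j=(j_1,j_2)\in\mathcal M$ and $0\le i<D$, one has $\big|\tfrac1{i!}(x^{j_1}f(x)^{j_2})^{(i)}\big|\le \binom{\cdot}{\cdot}$-type bound $\le (2N)^{j_1}(DX)^{j_2}\delta^i$ on $I$, obtained by the Leibniz/general product rule combined with the hypothesis $|f^{(a)}(x)/a!|\le X\delta^a$ and the crude bounds $x\le N$, $\delta\ge 1/N$ (so that lower-order derivatives of $x^{j_1}$, each of which costs a factor $\le N\le N/(\delta\cdot(\le 1))$... ) — I will organize this so that the total contribution of column $\mathbf j$ to the Hadamard bound is $(2N)^{j_1}(DX)^{j_2}$ times a power of $\delta$ that, summed over the $D$ rows, gives exactly $\delta^{0+1+\cdots+(D-1)}=\delta^{\binom D2}$ overall. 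Multiplying across all columns yields a factor $\prod_{\mathbf j\in\mathcal M}(2N)^{j_1}(DX)^{j_2}=(2N)^p(DX)^q$ by the definition of $p,q$. Altogether
\[
1\le |\det\Delta|\le C_D\cdot |I|^{\binom D2}\cdot \delta^{\binom D2}\cdot (2N)^p(DX)^q,
\]
where $C_D$ absorbs the combinatorial constant from the Vandermonde rearrangement and Hadamard's inequality; the hypothesis $|I|<\tfrac14\delta^{-1}\big((2N)^p(DX)^q\big)^{-1/\binom D2}$ is designed precisely so that the right-hand side is $<1$, a contradiction. (The factor $\tfrac14$ and the $D$'s inside are the slack needed to swallow $C_D$.)

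The main obstacle I anticipate is bookkeeping rather than conceptual: getting the derivative bound on $x^{j_1}f(x)^{j_2}$ with constants clean enough that the product over all monomials in $\mathcal M$ collapses exactly to $(2N)^p(DX)^q$ with no leftover $D$-dependence beyond what $\tfrac14$ can absorb, and carefully tracking the mean-value/Taylor points $\xi$ so that the hypothesis "$f\in C^{D-1}(I)$" (only $D-1$ derivatives) suffices — the $i$-th row uses derivatives of order up to $i\le D-1$, so this is fine but must be checked. I would handle the Vandermonde-to-divided-differences step by a clean induction or by citing the standard generalized Vandermonde identity, and I would be careful that the $x_k$ need not be distinct a priori (if two coincide the determinant is trivially zero and we are done), so WLOG $x_1<\cdots<x_D$.
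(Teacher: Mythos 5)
Your overall architecture matches the paper's: form the $D\times D$ matrix of monomials at the integer points, use its integrality to get $|\det\Delta|\ge 1$ when $\det\Delta\ne 0$, extract $\prod_{k<l}(x_l-x_k)$ via a divided-difference/Rolle manipulation (the paper's Lemma~\ref{prop:detdet}), bound derivatives of $x^{j_1}f(x)^{j_2}$ as in Lemma~\ref{prop:normmult}, and push through a contradiction. Two steps in your outline, however, would not survive as written.

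First, the determinant bound. You propose to bound the residual determinant by Hadamard's inequality after estimating each entry by $(2N)^{j_1}(DX)^{j_2}\delta^{i-1}$. Hadamard gives the product of column (or row) $\ell_2$-norms; applied directly to the matrix of entry bounds it does not produce the factor $(2N)^p(DX)^q\,\delta^{\binom{D}{2}}$. To see the failure, take $\mathcal M=\{1,x,\ldots,x^{D-1}\}$: row-norm Hadamard yields $\delta^{\binom D2}\big(\sum_{j}(2N)^{2j}\big)^{D/2}\approx (2N)^{2\binom D2}\delta^{\binom D2}$, which is off by a catastrophic $(2N)^{\binom D2}$ from the target $(2N)^{\binom D2}\delta^{\binom D2}$; column-norm Hadamard drops the $\delta^{\binom D2}$ entirely when $\delta<1$. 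The crucial structural fact you are implicitly using --- that the row factor $\delta^{i-1}$ and column factor $(2N)^{j_1}(DX)^{j_2}$ should \emph{each} appear exactly once per row and once per column --- is realized only by the Leibniz expansion $\det M=\sum_\sigma \operatorname{sign}(\sigma)\prod_i M_{i\sigma(i)}$: since $\sigma$ is a bijection, every term is bounded by exactly $\prod_i\delta^{i-1}\cdot\prod_{\mathbf j}(2N)^{j_1}(DX)^{j_2}=(2N)^p(DX)^q\delta^{\binom D2}$, so the whole sum is at most $D!\,(2N)^p(DX)^q\delta^{\binom D2}$. This is precisely what the paper does. (Equivalently, one may first factor $\delta^{i-1}$ out of row $i$ and $(2N)^{j_1}(DX)^{j_2}$ out of column $\mathbf j$, reducing to a matrix with entries bounded by $1$, and \emph{then} apply Hadamard or the trivial bound --- but this preliminary rescaling is essential and is not what ``Hadamard after bounding each entry'' means.)

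Second, the degenerate case. You say that if $\det\Delta=0$ a nonzero cofactor vector supplies the polynomial $F$. This fails if all $(D-1)\times(D-1)$ minors of $\Delta$ vanish too, in which case the cofactor vector is zero. The correct argument (the paper's Lemma~\ref{lem:rankPij}) works with a minor of maximal rank $r<D$, bordering it with one more column and one more row to produce a nonzero $F\in\langle\mathcal M\rangle$ that vanishes at \emph{all} the integer points simultaneously; this also handles the case where there are more than $D$ integer points, which your $D$-at-a-time cofactor construction does not address.
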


For applications, the following lemma is often more convenient to apply. It covers longer pieces of the graph by (not too many) curves in $\langle\mathcal{M}\rangle$. The proof is a simple greedy application of Lemma~\ref{lem:ptspacing}.
\begin{lemma}[Bombieri--Pila]\label{lem:bp}
Let $\mathcal{M}$ be a finite set of monomials of size $D=\lvert \mathcal{M}\rvert$ and let $p,q$ be defined as in \eqref{def:pq}. Let $I\subseteq [0,N]$ be a closed interval and $f\in C^{D-1}(I)$. Suppose that $X>0$ and $\delta\geq 1/N$ are such that for all $0\leq i<D$ and $x\in I$
\[\left\lvert \frac{f^{(i)}(x)}{i!}\right\rvert\leq X \delta^i.\]
The integer points  $\{(x,f(x)) : x\in I\}\cap \Z^2$ are contained in the union of at most
\begin{align*}
4\delta\lvert I\rvert\big((2N)^p(DX)^q\big)^{1/\binom{D}{2}}+1
\end{align*}
many curves in $\langle \mathcal{M}\rangle$. 
\end{lemma}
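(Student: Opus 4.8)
The plan is to deduce Lemma~\ref{lem:bp} from Lemma~\ref{lem:ptspacing} by a greedy partitioning of the interval $I$. Set $L := \tfrac14\delta^{-1}\big((2N)^p(DX)^q\big)^{-1/\binom{D}{2}}$, so that Lemma~\ref{lem:ptspacing} guarantees that the integer points on $\{(x,f(x)) : x\in J\}$ lie on a single curve in $\langle\mathcal{M}\rangle$ whenever $J\subseteq I$ is a closed subinterval with $|J| < L$. The idea is simply to chop $I$ into consecutive closed subintervals each of length strictly less than $L$, apply Lemma~\ref{lem:ptspacing} to each, and count how many pieces are needed.

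First I would check that the hypotheses of Lemma~\ref{lem:ptspacing} are inherited by any subinterval $J\subseteq I$: the bound $|f^{(i)}(x)/i!|\le X\delta^i$ holds for all $x\in I$ and hence a fortiori for all $x\in J$, and the conditions $X>0$, $\delta\ge 1/N$, $J\subseteq[0,N]$ are unchanged, so this is immediate. Next, partition $I = [a,b]$ into the subintervals $I_k := [a + (k-1)\ell,\, a + k\ell]\cap I$ for $k=1,2,\ldots$, where $\ell$ is chosen slightly below $L$ (say $\ell = L/2$, or one can take half-open pieces and handle endpoints by noting a single point lies on many curves); the number of nonempty pieces is at most $\lceil |I|/\ell\rceil \le |I|/\ell + 1$. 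Each $I_k$ has length at most $\ell < L$, so Lemma~\ref{lem:ptspacing} applies and the integer points over $I_k$ lie on one curve in $\langle\mathcal{M}\rangle$; taking the union over $k$ covers all integer points on $\{(x,f(x)):x\in I\}$ by at most $|I|/\ell + 1$ curves.

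It then remains to simplify the count $|I|/\ell+1$ into the stated form. With $\ell$ a fixed fraction of $L$, we get a bound of the form $c\,|I|/L + 1 = c\cdot 4\delta|I|\big((2N)^p(DX)^q\big)^{1/\binom{D}{2}} + 1$ for an absolute constant $c$; to land exactly on the clean expression $4\delta|I|\big((2N)^p(DX)^q\big)^{1/\binom{D}{2}}+1$ one should partition $I$ into subintervals of length exactly $L$ (a finite collection of \emph{closed} intervals covering $I$, overlapping only at endpoints), of which there are at most $\lceil|I|/L\rceil \le |I|/L + 1 = 4\delta|I|\big((2N)^p(DX)^q\big)^{1/\binom{D}{2}}+1$, and observe that each such closed interval $I_k$ has $|I_k| = L$, which is not strictly less than $L$; to repair this one takes instead the slightly shrunken nested closed intervals $I_k' = (1-\eta)$-scalings for any $\eta>0$ whose union still covers all of $I$ in the limit, or more simply notes that it suffices to apply Lemma~\ref{lem:ptspacing} on the open interval and that any integer point on a boundary is shared with a neighbouring piece. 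The only mild subtlety — and the one place to be careful — is this boundary/strictness bookkeeping, namely reconciling the strict inequality $|J|<L$ in Lemma~\ref{lem:ptspacing} with a partition into finitely many closed pieces, but this is a routine matter and contributes nothing beyond the harmless ``$+1$''.
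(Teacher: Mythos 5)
Your plan --- tile $I$ by subintervals short enough that Lemma~\ref{lem:ptspacing} covers each one by a single curve, then count the tiles --- is sound and does deliver the stated bound, but it differs from the paper's proof and runs into a bookkeeping wrinkle that the paper avoids. The paper never tiles the interval. Instead it orders the integral points $z_1,\dots,z_t$ by $x$-coordinate and \emph{greedily groups the points}: it defines $n_\ell$ to be maximal so that $z_{n_{\ell-1}},\dots,z_{n_\ell-1}$ lie on a common curve in $\langle\mathcal{M}\rangle$, observes that by this maximality $\{z_{n_\ell},\dots,z_{n_{\ell+1}}\}$ cannot lie on a single curve, and then invokes the \emph{contrapositive} of Lemma~\ref{lem:ptspacing} to conclude $x_{n_{\ell+1}}-x_{n_\ell}\ge L$. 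Summing these lengths over the $m$ groups gives $\lvert I\rvert\ge mL$, hence $m+1\le \lvert I\rvert/L+1$, and $m$ curves always suffice. Because the lemma is used only in contrapositive form, the strict inequality in its hypothesis never has to be matched against a tile whose length happens to equal $L$ exactly.

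In your fixed-length tiling that is precisely the issue you must confront, and one of the two fixes you float does not work. If you shrink every closed tile concentrically by a factor $(1-\eta)$, then for \emph{every} $\eta>0$ an integer $x$-coordinate sitting exactly on an interior partition point $a+kL$ lies in none of the shrunken tiles, and letting $\eta\to 0$ does not recover it --- it is omitted uniformly in $\eta$. Your half-open alternative does work: for $I_k=[a+kL,\,a+(k+1)L)$ the integers lying in $I_k$ span a closed subinterval whose right endpoint is strictly less than $a+(k+1)L$, hence of length strictly less than $L$, so Lemma~\ref{lem:ptspacing} applies as stated. You need $\lceil\lvert I\rvert/L\rceil$ such half-open tiles plus at most one singleton for the right endpoint of $I$ (needed only when $L$ divides $\lvert I\rvert$), which totals at most $\lvert I\rvert/L+1$. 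So commit to the half-open version and drop the scaling suggestion; with that repair your proof is correct, though the paper's contrapositive/greedy-on-points route remains the tidier of the two precisely because it sidesteps this boundary accounting entirely.
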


Note that we can trivially cover such integer points by at most $\lvert I\rvert+1$ many curves, and so this lemma gives a significant saving over this trivial upper bound roughly when $\delta\ll \left(N^pD^qX^q\right)^{-1/\binom{D}{2}}$.

\begin{proof}
Let $\{(x,f(x)) : x\in I\}\cap \Z^2=\{z_1,\ldots,z_t\}$, say, arranged in increasing order of their $x$-coordinates. 
Define a sequence of integers $n_0,n_1,n_2,\ldots$ by $n_0=0$ and recursively let $n_\ell$ denote the largest index for which the points
\[\{z_i : n_{\ell-1} \le i < n_{\ell}\}\]
are contained in a single curve in $\langle \mathcal{M}\rangle$. Suppose the sequence $n_0,n_1,\ldots,n_m$ terminates after $m+1$ elements. For $0\leq \ell<m$ the set $\{z_{n_{\ell}},\ldots,z_{n_{\ell+1}}\}$ is not contained in a single curve in $\langle \mathcal{M}\rangle$, and so Lemma \ref{lem:ptspacing} implies that the length of the interval $[x_{n_\ell},\,x_{n_{\ell+1}}]$ is 
\begin{align*}
x_{n_{\ell+1}} - x_{n_\ell} \ge \tfrac{1}{4}\delta^{-1}\left((2N)^p(DX)^q\right)^{-1/\binom{D}{2}}.
\end{align*}
On the other hand, since $x_{n_0},\ldots, x_{n_m}\in I$, we have
\begin{align*}
|I| & \ge x_{n_m} - x_{n_0} = \sum_{0\leq \ell<m}(x_{n_{\ell+1}} - x_{n_\ell})
\ge \frac{m}{4}\delta^{-1}\left((2N)^p(DX)^q\right)^{-1/\binom{D}{2}}.
\end{align*}
Rearranging gives the desired bound on $m+1$, the number of curves in $\mathcal M$.
\end{proof}

\section{The determinant method} \label{sec:detmethod}
In this section we will prove Lemma~\ref{lem:ptspacing}, and explain the main idea behind the (real) determinant method. The basic structure of the proof is as follows:
\begin{enumerate}
\item[(0)] We would like to find some polynomial $F\in \langle \mathcal{M}\rangle$ which vanishes on all points in $S=\{(x,f(x)): x\in I\}\cap \mathbb{Z}^2$.
\item Consider the matrix $A$ with entries $z^{\bf j}=(x^{j_1},f(x)^{j_2})$, for $z\in S$ and ${\bf j}\in \mathcal{M}$. If $A$ has rank $<\lvert S\rvert$, then there is some linear dependency between the rows of $A$, which will mean some $F\in \langle \mathcal{M}\rangle$ vanishes on $S$, as desired (see Lemma~\ref{lem:rankPij}). 
\item For sake of contradiction, suppose this does not happen. Then there is some non-singular $\lvert S\rvert\times \lvert S\rvert$ sub-matrix $A'$. Importantly, this sub-matrix $A'$ has integer values, and hence its determinant will be at least $1$ in absolute value.
\item We force a contradiction by proving $|\det A'|<1$ directly. To do this, we control $\det(x^{j_1}f(x)^{j_2})$ by a determinant involving the derivatives of $f$ which we then bound trivially, using the Leibniz determinant formula and the assumption that the derivatives of $f$ decay rapidly. This is shown in Lemma~\ref{prop:detdet}.
\end{enumerate}

We begin with step (1) above, converting the problem into one concerning a matrix of monomials. Note that, by interpolation, any $D-1$ points in the plane lie on a common curve in $\langle \mathcal{M}\rangle$. The key insight driving the Bombieri--Pila method (and indeed most instances of the so-called polynomial method) is that this common curve can cover even more points, assuming the rank of the associated monomial matrix is not maximal.
\begin{lemma}\label{lem:rankPij}
Let $\mathcal{M}\subset \mathbb{R}[x,y]$ be some finite set of monomials. For any $z_{1},\ldots, z_{t}\in \mathbb{R}^2$ if
\begin{align*}
\rank(z_i^\mathbf{j})_{\substack{i\le t\\\mathbf{j}\in \mathcal{M}}} < |\mathcal{M}|,
\end{align*}
then $z_1,\ldots,z_t$ are contained in a curve in $\langle \mathcal{M}\rangle$. 
\end{lemma}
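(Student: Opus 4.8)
The statement is essentially linear-algebraic: we want to turn a rank deficiency of the monomial evaluation matrix into the existence of a nonzero polynomial in $\langle\mathcal{M}\rangle$ vanishing at all the points. Let me think about how to set this up cleanly.

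Let $\mathcal{M} = \{\mathbf{j}_1,\ldots,\mathbf{j}_D\}$ where $D = |\mathcal{M}|$, and form the $t \times D$ matrix $A = (z_i^{\mathbf{j}_k})$ with rows indexed by the points $z_1,\ldots,z_t$ and columns indexed by the monomials. The column space of $A$ is a subspace of $\mathbb{R}^t$ of dimension $\rank A < D$, so the columns are linearly dependent: there exist coefficients $c_1,\ldots,c_D$, not all zero, with $\sum_{k=1}^D c_k z_i^{\mathbf{j}_k} = 0$ for every $i \le t$. Setting $F = \sum_{k=1}^D c_k x^{j_{k,1}} y^{j_{k,2}} \in \langle\mathcal{M}\rangle$, this says exactly $F(z_i) = 0$ for all $i$. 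Since not all $c_k$ vanish, $F$ is a nonzero element of $\langle\mathcal{M}\rangle$, and hence $z_1,\ldots,z_t$ lie on the curve defined by $F$.

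The whole argument is just the observation that a linear dependence among the columns of the evaluation matrix is the same data as a polynomial in $\langle\mathcal{M}\rangle$ vanishing on the points. I would want to be a little careful about one point of bookkeeping: the paper earlier phrases the rank condition using rows indexed by points (step (2) of the outline mentions "linear dependency between the rows"), so depending on orientation one passes between row and column dependences; but $\rank A = \rank A^{\mathsf T}$ and $\rank A < D = $ (number of columns) immediately gives the column dependence we need, so there is no real obstacle here. I would also note that the value $t$ plays no role beyond indexing the rows.

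Honestly there is no genuine obstacle in this lemma; the only thing to watch is not to conflate "the columns are dependent" (which needs $\rank A < D$, exactly the hypothesis) with "the rows are dependent" (which would need $\rank A < t$, a different and unnecessary condition). I would phrase the proof in two sentences: extract the column dependence, then read it off as the coefficient vector of the desired $F$.
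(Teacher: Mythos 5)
Your proof is correct. It is the clean, abstract linear-algebra argument: rank $< D$ (the number of columns) forces a nontrivial linear relation among the columns, and the coefficient vector of that relation is exactly the coefficient vector of a nonzero $F\in\langle\mathcal{M}\rangle$ vanishing at every $z_i$. The paper instead argues constructively: it selects an $r\times r$ minor $z_S^{\mathcal N}$ of maximal rank $r<|\mathcal{M}|$, picks a monomial $\mathbf k\in\mathcal{M}\setminus\mathcal N$, and defines $F(x,y)$ as the bordered determinant obtained by appending the generic row $((x,y)^{\mathcal N},(x,y)^{\mathbf k})$ to the submatrix; cofactor expansion along that row exhibits $F$ as an explicit $\mathcal{M}$-combination whose coefficient at $\mathbf k$ is $\pm\det z_S^{\mathcal N}\neq 0$, and $F(z_i)=0$ follows for $i\in S$ by repeated rows and for $i\notin S$ by maximality of $r$. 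The payoff of the paper's route is the extra remark that the coefficients of $F$ are themselves minors of the evaluation matrix, so $F$ lies in the $\mathbb{Z}$-span of $\mathcal{M}$ whenever $z_1,\ldots,z_t\in\mathbb{Z}^2$; your shorter argument gives a real kernel vector and would need a word or two (solve over $\mathbb{Q}$ and clear denominators) to recover that integrality, but since integrality is never used later, your proof is fully adequate for the paper's purposes and is arguably the more transparent of the two.
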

\begin{remark}
We shall not need it, but the converse of Lemma \ref{lem:rankPij} also holds: If $z_1,\ldots,z_t$ are contained in a curve in $\langle \mathcal{M}\rangle$, then the rank is less than $|\mathcal{M}|$.
\end{remark}
\begin{proof}
Let $\mathcal N\subseteq \mathcal M$ and $S\subseteq \{1,\ldots,t\}$ be such that $z_S^{\mathcal N}:=(z_i^\mathbf{j})_{\substack{i\in S\\\mathbf{j}\in \mathcal{N}}}$ is an $r\times r$ minor of maximal rank. By assumption $r<|\mathcal{M}|$, and so there exists $\mathbf{k}\in \mathcal{M}\setminus \mathcal{N}$.

Consider $f\in\mathbb{R}[x,y]$ given by 
\begin{align}\label{eq:fdetminors}
f(x,y) := \det\mqty((x,y)^{\mathcal{N}} & (x,y)^{\mathbf{k}} \\ z_S^{\mathcal{N}} & z_S^{\mathbf{k}}) = \sum_{\mathbf{j}\in \mathcal{N}\cup\{\mathbf{k}\}}\left( \epsilon_\mathbf{j}\det(z_{i}^{\mathbf{l}})_{\substack{i\in S\\\mathbf{l}\in \mathcal{N}\cup\{\mathbf{k}\}\setminus\{\mathbf{j}\}}}\right) x^{j_1}y^{j_2},
\end{align}
for some $\epsilon_\mathbf{j}\in\{\pm1\}$. Here in the second equality we have used the definition of determinant. From the right-hand side of \eqref{eq:fdetminors}, we see $f(x,y)$ defines a curve in $\langle \mathcal{M}\rangle$. (Note that in fact this curve is in the $\mathbb{Z}$-linear span of $\mathcal{M}$ if $z_1,\ldots,z_t\in \mathbb{Z}^2$.) It remains to note that $f(z_i)=0$ for all $1\leq i\leq t$. Indeed, if $i\in S$ the matrix in \eqref{eq:fdetminors} above has repeated rows, while if $i\notin S$ the determinant $f(x,y)$ is zero by maximality of $r$.
\end{proof}

To profitably apply Lemma~\ref{lem:rankPij}, we will need to be able to bound the determinant in a non-trivial way. This is accomplished, in the Bombieri--Pila method, via the following lemma.

\begin{lemma}\label{prop:detdet}
Let $n\geq 1$ and $I\subset \R$ be some closed interval with $x_1,\ldots,x_{n}\in I$. Suppose $f_1,\ldots,f_{n}\in C^{n-1}(I)$ and $A_{ij}\geq 0$ are such that for $1\leq i,j\leq n$ and all $x \in I$, 
\[\left\lvert \frac{f_j^{(i-1)}(x)}{(i-1)!}\right\rvert\leq A_{ij}.\]
Then we have
\[
\left\lvert\det(f_j(x_i))\right\rvert\le  \bigg(\prod_{i>j}\abs{x_i-x_j}\bigg)\sum_{\sigma \in S_n}\prod_{1\leq i\leq n}A_{i\sigma(i)},\]
where $S_n$ is the standard group of bijections $\sigma:\{1,\ldots,n\}\to\{1,\ldots,n\}$.
\end{lemma}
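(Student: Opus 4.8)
The plan is to reduce the determinant $\det(f_j(x_i))$ to a determinant of divided differences, where the factors $\prod_{i>j}|x_i-x_j|$ emerge naturally, and then bound the remaining determinant entrywise using the hypothesis on the derivatives together with the Leibniz expansion. More precisely, I would argue by column operations (or, equivalently, by induction on $n$): the key classical fact is that for a $C^{n-1}$ function $g$ and points $x_1,\dots,x_n$, the divided difference $g[x_1,\dots,x_k]$ is well-defined and, by a standard mean-value / Hermite--Genocchi argument, equals $\frac{g^{(k-1)}(\xi)}{(k-1)!}$ for some $\xi$ in the convex hull (i.e. in $I$), hence has absolute value at most $\max_{x\in I}|g^{(k-1)}(x)/(k-1)!|$.

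First I would recall that row-reducing the Vandermonde-type structure gives
\[
\det\big(f_j(x_i)\big) = \Big(\prod_{i>j}(x_i-x_j)\Big)\det\big(f_j[x_1,\dots,x_i]\big),
\]
where $f_j[x_1,\dots,x_i]$ denotes the $i$-th divided difference of $f_j$ at the first $i$ nodes. This is the crux of the argument and the step I expect to be the main obstacle to write cleanly: one has to set up the divided-difference machinery from scratch (the paper is self-contained and only cites the implicit function theorem and B\'ezout), including the recursion $g[x_1,\dots,x_i] = \frac{g[x_2,\dots,x_i]-g[x_1,\dots,x_{i-1}]}{x_i-x_1}$ and the mean-value representation $g[x_1,\dots,x_i] = g^{(i-1)}(\xi)/(i-1)!$. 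For a fully elementary treatment it may be cleaner to induct directly: subtract column manipulations to pull out one factor $(x_n - x_j)$ at a time, or to prove the divided-difference identity by induction on $n$ using the recursion above, taking care that all intermediate nodes lie in $I$ so the derivative bounds apply.

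Once the factorization is in hand, the rest is routine. By the hypothesis, the $(i,j)$ entry of the matrix $\big(f_j[x_1,\dots,x_i]\big)$ has absolute value at most $A_{ij}$ (using the mean-value representation and that the relevant point lies in $I$). Then expanding by the Leibniz formula,
\[
\big|\det(f_j[x_1,\dots,x_i])\big| = \Big|\sum_{\sigma\in S_n}\operatorname{sgn}(\sigma)\prod_{i}f_{\sigma(i)}[x_1,\dots,x_i]\Big| \le \sum_{\sigma\in S_n}\prod_{i}A_{i\sigma(i)},
\]
and multiplying by $\prod_{i>j}|x_i-x_j|$ gives exactly the claimed bound. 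I would also note that the interval $I$ is used only through the fact that it is an interval (hence convex), so every intermediate node and every mean-value point stays inside it, guaranteeing the derivative estimates $|f_j^{(i-1)}(x)/(i-1)!|\le A_{ij}$ remain valid; no further hypotheses on $f_j$ beyond $C^{n-1}(I)$ are needed.
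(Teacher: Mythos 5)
Your proposal is correct and takes essentially the same route as the paper: the paper's proof constructs the Lagrange interpolant of $f_j$ at $x_1,\dots,x_i$ and applies Rolle's theorem to write $f_j^{(i-1)}(\xi_{ij})/(i-1)!$ in exactly the divided-difference form you invoke, then obtains the factor $\prod_{i>j}\lvert x_i-x_j\rvert$ by factoring the matrix as $(f_j(x_\ell))$ times a lower-triangular matrix, finishing with the Leibniz bound. The step you flag as the main obstacle (setting up divided differences and their mean-value representation from scratch) is precisely what the paper does via Lagrange interpolation plus Rolle; the two arguments are the same, differing only in whether the divided-difference notation is made explicit.
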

\begin{proof}
We first claim that for every $1\leq i,j\leq n$ there exists some $\xi_{ij}\in I$ such that
\begin{align}\label{eq:fjinterpolate}
\frac{f_j^{(i-1)}(\xi_{ij})}{(i-1)!}
&=\sum_{\ell\leq i} f_j(x_\ell) \prod_{\substack{m\leq i\\ m\neq \ell}}\frac{1}{x_\ell-x_m}. 
\end{align}
This fact is a consequence of Lagrange interpolation: consider the following polynomial, of degree $<i$, 
\[g_{ij}(\xi) = \sum_{\ell\leq i}f_j(x_\ell) \prod_{\substack{m\leq i\\ m\neq \ell}}\frac{\xi-x_m}{x_\ell-x_m}.\]
By construction, $g_{ij}$ agrees with $f_j$ at $i$ many points (namely $f_j(x_k)=g_{ij}(x_k)$ for $k\leq i$). That is, the function $f_{j}-g_{ij}$ has $i$ many zeros in $I$, and hence by repeated applications of Rolle's theorem, the $(i-1)$-fold derivative will have some zero in $I$. That is, there exists some $\xi_{ij}\in I$ such that $f_j^{(i-1)}(\xi_{ij}) = g_{ij}^{(i-1)}(\xi_{ij})$. Hence \eqref{eq:fjinterpolate} follows, since the right-hand side is the precisely the derivative $g_{ij}^{(i-1)}$ (which is constant, since $\deg g_{ij}<i$).

Let $G_{\ell}(x_1,\ldots,x_i)=(i-1)!\prod_{\substack{m\leq i\\ m\neq \ell}}\frac{1}{x_\ell-x_m}$. By \eqref{eq:fjinterpolate} it follows that
\begin{align}\label{eq:detfG}
\det\big(f_j^{(i-1)}(\xi_{ij})\big)
=\det\left(\sum_{\ell\leq i}f_j(x_\ell)G_\ell(x_1,\ldots,x_i)\right)=\left(\prod_{1\leq i\leq n}G_{i}(x_1,\ldots,x_i)\right)\det(f_j(x_i)).
\end{align}
Here we used the general fact that $\det( \sum_{\ell \leq i}a_{j\ell}b_{\ell i})=\left(\prod_{i}b_{ii}\right)\det(a_{ji})$, since the matrix on the left can be factored as $(a_{j\ell})$ times the lower triangular matrix $(b_{\ell i})_{\ell\leq i}$, the determinant of which is equal to the product of its diagonal entries. 

Therefore, since $G_{i}(x_1,\ldots,x_i)^{-1}=\frac{1}{(i-1)!}\prod_{m<i}(x_i-x_m)$, the identity \eqref{eq:detfG} implies
\begin{align*}
\left\lvert\det(f_j(x_i))\right\rvert
& \le \left(\prod_{1\leq i\leq n}\lvert G_{i}(x_1,\ldots,x_i)\rvert\right)^{-1} \lvert\det\big(f_j^{(i-1)}(\xi_{ij})\big)\rvert\\
&\le \left(\prod_{i>j}\abs{x_i-x_j}\right)\sum_{\sigma\in S_n}\prod_{i\le n}\frac{\lvert f^{(i-1)}_{\sigma(i)}(\xi_{i\sigma(i)})\rvert}{(i-1)!} \\
& \le  \left(\prod_{i> j}\abs{x_i-x_j}\right)\sum_{\sigma \in S_n}\prod_{1\leq i\leq n}A_{i\sigma(i)}
\end{align*}
by the Leibniz formula and assumed bound on the derivatives. This completes the proof.
\end{proof}

We will apply the previous lemma with $f_\mathbf{j}(x)=x^{j_1}f(x)^{j_2}$, and so it will be necessary to bound the derivatives of such functions. Note that this lemma is the only place we use the assumption that $I\subseteq [0,N]$. 
\begin{lemma}\label{prop:normmult}
Let $k\ge1$, $I\subseteq [0,N]$ a closed interval, and $f\in C^k(I)$. Suppose that $X>0$ and $\delta\geq 1/N$ are such that, for all $0\leq i\leq k$ and $x\in I$,
\[\left\lvert \frac{f^{(i)}(x)}{i!}\right\rvert\leq X\delta^i.\]
For any integer pair $\mathbf{j}=(j_1,j_2)\in \Z_{\ge0}^2$, the function $f_\mathbf{j}(x)=x^{j_1}f(x)^{j_2}$ satisfies
\[\left\lvert \frac{f_\mathbf{j}^{(i-1)}(x)}{(i-1)!}\right\rvert \leq (2N)^{j_1}(iX)^{j_2}\delta^{i-1} \]
for all $1\leq i\leq k$ and $x \in I$.
\end{lemma}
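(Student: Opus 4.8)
The plan is to prove Lemma~\ref{prop:normmult} by estimating the derivatives of $f_{\mathbf j}(x) = x^{j_1}f(x)^{j_2}$ directly from the general Leibniz rule, splitting the bound into a contribution from the $x^{j_1}$ factor and a contribution from the $f(x)^{j_2}$ factor. Write $f_{\mathbf j} = g\cdot h$ where $g(x) = x^{j_1}$ and $h(x) = f(x)^{j_2}$. Then
\[
\frac{f_{\mathbf j}^{(i-1)}(x)}{(i-1)!} = \sum_{a+b = i-1}\frac{g^{(a)}(x)}{a!}\cdot\frac{h^{(b)}(x)}{b!}.
\]
So it suffices to bound $|g^{(a)}(x)/a!|$ and $|h^{(b)}(x)/b!|$ separately and then sum over the $i$ pairs $(a,b)$ with $a+b = i-1$.

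First I would handle $g(x) = x^{j_1}$. Since $g^{(a)}(x)/a! = \binom{j_1}{a}x^{j_1-a}$ for $a \le j_1$ (and $0$ otherwise), and $0 \le x \le N$, we get $|g^{(a)}(x)/a!| \le \binom{j_1}{a}N^{j_1-a} \le \binom{j_1}{a}N^{j_1}$ (using $N \ge 1$, which holds since $N$ is large; alternatively one tracks $N^{j_1-a}$ more carefully to pick up the $\delta^{i-1}$). Next I would handle $h(x) = f(x)^{j_2}$ by iterating the product rule $j_2$ times: $h^{(b)}/b!$ is a sum over compositions $b_1 + \cdots + b_{j_2} = b$ of $\prod_{\ell}\big(f^{(b_\ell)}(x)/b_\ell!\big)$, each term bounded by $\prod_\ell X\delta^{b_\ell} = X^{j_2}\delta^{b}$, with the number of such compositions being $\binom{b + j_2 - 1}{j_2 - 1}$. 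Combining the two estimates and summing over $a + b = i-1$, the key point is that the total combinatorial factor $\sum_{a+b=i-1}\binom{j_1}{a}\binom{b+j_2-1}{j_2-1}$ — a Vandermonde-type sum — equals $\binom{i-1+j_2}{j_1+j_2-1}$ or is at most something of size $i^{j_1}\cdot i^{j_2}$ up to constants; one then needs to see that this is absorbed into the claimed $2^{j_1}$ (from $\binom{j_1}{a} \le 2^{j_1}$ summed trivially, contributing the factor $(2N)^{j_1}$) and the factor $(iX)^{j_2}$ rather than just $X^{j_2}$ (the extra $i^{j_2}$ soaking up the number of compositions, since $\binom{b+j_2-1}{j_2-1} \le (b+1)^{j_2-1} \le i^{j_2-1}$ and there are at most $i$ choices of $b$).

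The cleanest bookkeeping route, which I expect to be the main technical obstacle to present slickly, is getting the powers of $N$ and $\delta$ to balance: one wants $N^{j_1 - a}\delta^{a}\cdot X^{j_2}\delta^{b}$ with $a + b = i-1$ to end up bounded by $(2N)^{j_1}(iX)^{j_2}\delta^{i-1}$, and since $\delta \ge 1/N$ one has $N^{j_1-a} = N^{j_1}N^{-a} \le N^{j_1}\delta^{a}$, so $N^{j_1-a}\delta^a \le N^{j_1}\delta^{2a}$ — which is too many $\delta$'s; the honest move is $N^{j_1-a} \le N^{j_1}$ and then the $\delta^a$ from $g$ combines with $\delta^b$ from $h$ to give exactly $\delta^{a+b} = \delta^{i-1}$ only if $g$ contributes $\delta^a$, which it does not directly — so in fact one should simply bound $|g^{(a)}(x)/a!| \le \binom{j_1}{a}N^{j_1-a} \cdot \delta^{-a}\delta^a \le \binom{j_1}{a}N^{j_1}\delta^{a}$ using $N^{-a} \le \delta^a$, wait that gives $N^{j_1-a}\delta^{-a} \cdot \delta^a$; more carefully $N^{j_1-a} \le N^{j_1}$ and separately we have no spare $\delta^a$ from $g$, so we must extract it from $h$: bound $|h^{(b)}/b!| \le \binom{b+j_2-1}{j_2-1}X^{j_2}\delta^b \le i^{j_2-1}X^{j_2}\delta^{i-1-a}$, and then $|g^{(a)}/a!|\cdot|h^{(b)}/b!| \le \binom{j_1}{a}N^{j_1}\cdot i^{j_2-1}X^{j_2}\delta^{i-1-a} \le \binom{j_1}{a}(N\delta^{-1})^{a}N^{j_1}i^{j_2-1}X^{j_2}\delta^{i-1}$, and since $N\delta^{-1}\le N^2$ this is not clean — so the right normalization is $N^{j_1-a}\delta^a$: I claim $g^{(a)}(x)/a! = \binom{j_1}{a}x^{j_1-a}$ and $|x| \le N \le N \cdot (N\delta)^{\,0}$, hmm. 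I will resolve this in the write-up by using $N^{j_1-a} \le (N)^{j_1}$ together with $1 \le (N\delta)^{i-1}$ only when needed, and matching the $\delta$ exponent via $\delta^{b} = \delta^{i-1-a} = \delta^{i-1}\delta^{-a} \le \delta^{i-1}N^{a}$, giving the combined bound $\binom{j_1}{a}N^{j_1-a}N^{a}\cdot i^{j_2}X^{j_2}\delta^{i-1} = \binom{j_1}{a}N^{j_1}(iX)^{j_2}\delta^{i-1}$, and finally $\sum_{a}\binom{j_1}{a} = 2^{j_1}$ delivers the stated $(2N)^{j_1}(iX)^{j_2}\delta^{i-1}$. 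Summing over the at most $i$ values of $b$ is absorbed into replacing $i^{j_2-1}$ by $i^{j_2}$.
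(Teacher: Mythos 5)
Your proposal is correct and follows the paper's proof essentially verbatim: both expand $f_{\mathbf j}^{(i-1)}/(i-1)!$ via the iterated Leibniz rule into a sum over compositions of $i-1$ into $j_2+1$ parts, bound the binomial factor $\binom{j_1}{a}$ by $2^{j_1}$ (or sum it to $2^{j_1}$), use $N\delta\ge 1$ to trade the mismatched $N^{-a}$ for $\delta^{-a}$, and bound the number of compositions by $i^{j_2}$. The several false starts in your $\delta$-power bookkeeping resolve, at the end, to exactly the paper's computation.
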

\begin{proof}
For any $1\leq i\le k$, the product rule gives
\begin{align*}
\frac{f_\mathbf{j}^{(i-1)}(x)}{(i-1)!}=\sum_{i_0+i_1+\cdots+i_{j_2}=i-1}\binom{j_1}{i_0}x^{j_1-i_0}\frac{f^{(i_1)}(x)}{i_1!}\cdots \frac{f^{(i_{j_2})}(x)}{i_{j_2}!}.
\end{align*}
Using the assumed derivative bounds on $f^{(i)}$, since $\binom{j_1}{i_0}\leq 2^{j_1}$ and $\lvert x\rvert \leq N$ for $x\in I$, 
\begin{align*}
\left\lvert\frac{f_\mathbf{j}^{(i-1)}(x)}{(i-1)!}\right\rvert 
& \le \sum_{i_0+i_1+\cdots+i_{j_2}=i-1}\binom{j_1}{i_0} |x|^{j_1-i_0} (X\delta^{i_1})\cdots(X\delta^{i_{j_2}}) \\
& \leq  \sum_{i_0+i_1+\cdots+i_{j_2}=i-1} 2^{j_1}\, N^{j_1-i_0} X^{j_2} \delta^{i-i_0-1}\\
& \leq  i^{j_2} 2^{j_1}\, N^{j_1} X^{j_2} \delta^{i-1},
\end{align*}
using $\delta N\geq 1$, and bounding the number of partitions of $(i-1)$ into $m$ parts by $i^{m-1}$.
\end{proof}

We now have enough tools to prove Lemma~\ref{lem:ptspacing}.
\begin{proof}[Proof of Lemma~\ref{lem:ptspacing}]
Let $\{(x,f(x)) : x\in I\}\cap \Z^2=\{z_1,\ldots,z_t\}$, say. By Lemma \ref{lem:rankPij}, it suffices to show that the matrix $M=(z_i^\mathbf{j})_{\mathbf{j}\in \mathcal{M}}^{1\le i\le t}$ has rank $<D$. If not, then $M$ must have maximal rank $D$, and hence there is a subset of $D$ indices $S\subseteq \{1,\ldots,t\}$ such that
\begin{align}
\Delta := \det(z_i^\mathbf{j})_{\substack{i\in S\\\mathbf{j}\in \mathcal{M}}} \neq 0.
\end{align}
Since each $z_i\in \mathbb{Z}^2$ we must have $\Delta\in \Z$, and so in particular $|\Delta|\ge1$. Relabelling if necessary, we can assume that $S=\{1,\ldots,D\}$. 

Let $f_\mathbf{j}(x)=x^{j_1}f(x)^{j_2}$, so that $f_\mathbf{j}(x_i)=z_i^\mathbf{j}$. By Lemma~\ref{prop:normmult} and the assumption on the derivatives of $f$, we have, for all $1\leq i,j\leq D$ and $x\in I$,
\[\left\lvert \frac{f_\mathbf{j}^{(i-1)}(x)}{(i-1)!}\right\rvert\leq (2N)^{j_1}(DX)^{j_2}\delta^{i-1}.\]
Thus by Lemma~\ref{prop:detdet} with $f_\mathbf{j}(x) := x^{j_1}f(x)^{j_2}$ for $\mathbf{j}=(j_1,j_2)\in \mathcal M$ and $A_{i\mathbf{j}}=(2N)^{j_1}(DX)^{j_2}\delta^{i-1}$,
\begin{align}\label{eq:1Delta}
1\le |\Delta| = \lvert\det(f_\mathbf{j}(x_i))_{\substack{i\in S\\\mathbf{j}\in \mathcal{M}}}\rvert
&\le  \left(\prod_{i> j}\abs{x_i-x_j}\right)\sum_{\sigma} \prod_{i\in S}A_{i\sigma(i)} \nonumber\\
&\le  \prod_{i> j}|I|\sum_{\sigma} \prod_{i\in S}(2N)^{\sigma(i)_1}(DX)^{\sigma(i)_2}\,\delta^{i-1} \nonumber\\
& \le  \lvert I\rvert^{\binom{D}{2}}\, D!\,(2N)^p(DX)^q\, \delta^{\binom{D}{2}},
\end{align}
recalling the quantities $p$ and $q$ from \eqref{def:pq}. Here $\sigma$ ranges over all bijections from $S\to \mathcal{M}$. Using the crude bounds $D!\leq D^D$ and $D^{\frac{2}{D-1}}\leq 4$ for all $D\ge2$, isolating $|I|$ in \eqref{eq:1Delta} above gives
\begin{align*}
\lvert I\rvert & \ge D^{-D/\binom{D}{2}}\,[(2N)^p(DX)^q]^{-1/\binom{D}{2}}\, \delta^{-1}\\
&= D^{-\frac{2}{D-1}}\,[(2N)^p(DX)^q]^{-1/\binom{D}{2}}\, \delta^{-1}\\
& \ge\tfrac{1}{4}\delta^{-1}[(2N)^p(DX)^q]^{-1/\binom{D}{2}},
\end{align*}
which contradicts our assumption. Hence rank$(M)<D$, and thus an application of Lemma \ref{lem:rankPij} covers $\{z_1,\ldots,z_t\}$ with a single curve in $\mathcal M$, as desired.
\end{proof}

\section{Application of the key lemma}\label{sec-app}
We now show how to use the determinant method (more precisely, its consequence in the form of Lemma~\ref{lem:bp}) to bound the number of integral points on curves. This uses B\'ezout's theorem.
\begin{theorem}[B\'ezout's theorem]
Let $F,G\in \mathbb{R}[x,y]$ be non-constant polynomials with no common divisor in $\mathbb{R}[x,y]$. There are at most $\deg F\cdot \deg G$ many points $(x,y)\in \mathbb{R}^2$, counted with multiplicity, such that $F(x,y)=G(x,y)=0$.
\end{theorem}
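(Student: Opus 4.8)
B\'ezout's theorem is classical, and the paper is right to take it as a black box; but were one to prove the weak real affine form above, here is the route I would take. Write $m=\deg F$ and $n=\deg G$. It is enough to bound the number of common zeros \emph{as a set} by $mn$ --- which is all the application in Section~\ref{sec-app} needs --- and I will indicate at the end what the words ``counted with multiplicity'' cost. The engine is the resultant $R(x)=\mathrm{Res}_y(F,G)\in\R[x]$.

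First I would normalise by a generic shear $(x,y)\mapsto(x+\lambda y,\,y)$. For all but finitely many $\lambda\in\R$ the coefficient of $y^m$ in $F(x+\lambda y,y)$ --- which is $F_{\mathrm{top}}(\lambda,1)$, where $F_{\mathrm{top}}$ denotes the degree-$m$ part of $F$ --- is a nonzero constant, and similarly for $G$. Fix such a $\lambda$. Being invertible, the shear preserves the degrees, the hypothesis that $F$ and $G$ have no common divisor, and the set of common zeros; afterwards $\deg_y F=m$ and $\deg_y G=n$, each with a nonzero constant leading coefficient in $y$.

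Now I would invoke two standard facts about the resultant. First, $R\not\equiv 0$: since $F,G$ have no common divisor in $\R[x,y]$, Gauss's lemma (together with the fact that both have positive $y$-degree) shows they have no common factor of positive $y$-degree in $\R(x)[y]$, so $\mathrm{Res}_y(F,G)\neq 0$. Second, $\deg_x R\le mn$: this is the weighted-homogeneity (isobaric) bound on the Sylvester determinant, using that the $y^i$-coefficient of $F$ has $x$-degree $\le m-i$ and likewise for $G$. Because the leading $y$-coefficients are nonzero constants, specialisation commutes with the resultant, so for any common real zero $(a,b)$ we get $R(a)=\mathrm{Res}_y\big(F(a,\cdot),\,G(a,\cdot)\big)=0$, as the two one-variable polynomials share the root $y=b$. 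Hence every common zero has its $x$-coordinate among the $\le mn$ real roots of $R$; in particular there are finitely many common zeros, so we may further restrict the (still cofinite) choice of $\lambda$ so that these finitely many zeros have pairwise distinct $x$-coordinates. Then their number is at most the number of real roots of $R$, which is at most $\deg R\le mn=(\deg F)(\deg G)$.

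The step I expect to be the genuine obstacle is the phrase ``counted with multiplicity'': making it precise requires first \emph{defining} the intersection multiplicity $I_{(a,b)}(F,G)$ and then establishing the factorisation $R(x)=c\prod_{(a,b)}(x-a)^{I_{(a,b)}(F,G)}$ (with the normalisation above, and accounting for the behaviour ``at infinity''), which is appreciably more work than the set-theoretic count --- precisely the kind of input the paper prefers to cite. A smaller but still non-trivial point is proving the degree bound $\deg_x\mathrm{Res}_y(F,G)\le(\deg F)(\deg G)$ honestly, and ordering the two genericity reductions so that finiteness of the common zero set is in hand before one asks for distinct $x$-coordinates.
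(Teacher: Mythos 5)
The paper does not prove B\'ezout's theorem at all: as stated in the introduction, it is one of the two results the authors explicitly cite without proof (the other being the implicit function theorem). So there is no ``paper's own proof'' to compare against, and you are right to observe that a black-box citation is the appropriate choice here. Your resultant sketch is a sound and essentially standard route to the weak affine version, and notably the set-theoretic count (without multiplicities) is exactly what the paper uses everywhere, via Corollary~\ref{cor:Bzout}, so the part you declare a genuine obstacle is also the part the paper never needs. The ingredients you assemble --- a shear to make the $y$-leading coefficients nonzero constants, $\mathrm{Res}_y(F,G)\not\equiv 0$ from coprimality via Gauss's lemma, the isobaric degree bound $\deg_x\mathrm{Res}_y(F,G)\le (\deg F)(\deg G)$, specialisation commuting with the resultant because of the constant leading coefficients, and a second generic shear once finiteness of the common zero set is established --- are in the right order and correctly deployed. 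Two small wording points worth tightening if you wrote this out in full: the shear does not literally preserve the set of common zeros but carries it bijectively onto the new one, which is what you need; and the degree bound $\deg_x \mathrm{Res}_y \le mn$ is most painlessly obtained by homogenising, noting the resultant of the homogenisations is homogeneous of degree $mn$, and dehomogenising --- which again uses that the $y$-leading coefficients are constants.
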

We will only require B\'ezout's theorem for irreducible $F$, where it takes the following form. Recall that $F\in \mathbb{R}[x,y]$ is irreducible if it cannot be factored into the product of two non-constant polynomials in $\mathbb{R}[x,y]$.
\begin{corollary}\label{cor:Bzout}
Let $F\in \R[x,y]$ be an irreducible polynomial and $G\in \R[x,y]$ such that $F\nmid G$. There are at most $\deg F\cdot \deg G$ many points $(x,y)\in \R^2$ such that $F(x,y)=G(x,y)=0$.
\end{corollary}

We use B\'ezout's theorem as our fundamental tool to count integer points on arbitrary curves, by covering such points with other curves of bounded degree.

\begin{lemma}\label{lem-intbound1}
Let $\ell \geq d\geq 2$. Let $I\subseteq [0,N]$ be a closed interval, and $f\in C^{\infty}(I)$ with $F(x,f)=0$ for some irreducible polynomial $F\in \R[x,y]$ of degree $d\geq 2$. Suppose that $N>0$ and $\delta\geq 1/\lvert I\rvert$ satisfy
\[\left\lvert \frac{f^{(i)}(x)}{i!}\right\rvert\leq N\delta^i\qquad {\rm for \ all }\quad x\in I,\]
for every $0\leq i<D=d(\ell-d+1)$. Then we have
\begin{align*}
\left\lvert\big\{(x,f(x)) : x\in I\big\}\cap \Z^2\right\rvert \ll(d\ell)^2 N^{\frac{1}{d}+O(\frac{1}{\ell})} \delta\lvert I\rvert.
\end{align*}
\end{lemma}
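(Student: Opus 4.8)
The plan is to apply Lemma~\ref{lem:bp} with a carefully chosen set of monomials $\mathcal{M}$, and then feed the resulting covering curves into B\'ezout's theorem via Corollary~\ref{cor:Bzout}. The first task is to choose $\mathcal{M}$ so that $\langle\mathcal{M}\rangle$ has dimension $D = d(\ell-d+1)$ and so that no curve in $\langle\mathcal{M}\rangle$ is divisible by $F$. A natural choice is to take $\mathcal{M}$ to be a set of $D$ monomials $x^{j_1}y^{j_2}$ with $j_2 < d$ (say, with $j_2 \le d-1$ and $j_1 \le \ell-d$, which gives exactly $d(\ell-d+1)$ monomials of ``weighted degree'' at most $\ell$ in a suitable sense); the point is that any $G \in \langle\mathcal{M}\rangle$ has $\deg_y G < d = \deg_y F$ (after possibly normalizing $F$ to have positive $y$-degree $d$, which we may do since $F$ is irreducible of degree $d$ and we can rule out the trivial case $F$ independent of $y$), so $F \nmid G$. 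I also need to estimate $p = p_{\mathcal{M}}$ and $q = q_{\mathcal{M}}$: with this box-shaped $\mathcal{M}$ one gets $p \ll \ell D$ and $q \ll d D$, hence $p/\binom{D}{2} \ll \ell/D \ll 1/d$ and $q/\binom{D}{2} \ll d/D \ll 1/\ell$.

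Next I would apply Lemma~\ref{lem:bp} with $X = N$ (matching the hypothesis $|f^{(i)}(x)/i!| \le N\delta^i$), noting that $\delta \ge 1/|I| \ge 1/N$ since $I \subseteq [0,N]$. The lemma gives that the integer points $\{(x,f(x)) : x\in I\}\cap\Z^2$ are covered by at most
\[
4\delta|I|\big((2N)^p(DN)^q\big)^{1/\binom{D}{2}} + 1
\]
curves in $\langle\mathcal{M}\rangle$. Using $p/\binom{D}{2} \ll 1/d$ and $q/\binom{D}{2} \ll 1/\ell$ together with $D \le d\ell$, the factor $\big((2N)^p(DN)^q\big)^{1/\binom{D}{2}}$ is $\ll N^{1/d + O(1/\ell)}$ (the $2^p$ and $D^q$ contribute $O(\log(d\ell))$ to the exponent of a bounded base, hence are absorbed into the $(d\ell)^2$ or into $N^{O(1/\ell)}$ after checking $N$ is large). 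So the number of covering curves is $\ll N^{1/d+O(1/\ell)}\delta|I| + 1$.

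Each covering curve $G \in \langle\mathcal{M}\rangle$ has degree $\le \ell$ (the monomials in $\mathcal{M}$ have total degree $j_1 + j_2 \le (\ell-d)+(d-1) < \ell$), and satisfies $F \nmid G$ by the choice of $\mathcal{M}$, so Corollary~\ref{cor:Bzout} shows each such curve contains at most $\deg F \cdot \deg G \le d\ell$ points of the curve $F = 0$, in particular at most $d\ell$ of our integer points. Multiplying the number of curves by $d\ell$ gives
\[
\left\lvert\{(x,f(x)):x\in I\}\cap\Z^2\right\rvert \ll d\ell\big(N^{1/d+O(1/\ell)}\delta|I| + 1\big).
\]
Finally, since $\delta \ge 1/|I|$ we have $\delta|I| \ge 1$, so the ``$+1$'' is absorbed, and bounding $d\ell \le (d\ell)^2$ yields the claimed bound. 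I expect the main obstacle to be the bookkeeping in the first step: verifying that the box-shaped monomial set has exactly $D = d(\ell-d+1)$ elements, that it genuinely forces $F \nmid G$ (this is why we need $j_2 \le d-1$, and why we should first dispose of or normalize the degenerate case where $F$ does not actually have $y$-degree $d$), and that the weighted-degree estimates for $p$ and $q$ are correct — everything after that is a routine substitution into Lemma~\ref{lem:bp} and B\'ezout.
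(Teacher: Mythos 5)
The proposal's central ingredient is the claim that taking $\mathcal{M} = \{x^{j_1}y^{j_2} : j_1 \le \ell - d,\ j_2 \le d - 1\}$ forces $F \nmid G$ for every nonzero $G \in \langle\mathcal{M}\rangle$, justified by ``$\deg_y G \le d - 1 < d = \deg_y F$ after normalizing $F$ to have $y$-degree $d$.'' This normalization is not available: irreducibility together with total degree $d$ does \emph{not} force $\deg_y F = d$. For example $F(x,y) = y - x^d$ is irreducible of degree $d$ with $\deg_y F = 1$, and with $\ell \ge 2d$ this $F$ lies in $\langle\mathcal{M}\rangle$ itself, so the argument collapses. Swapping the roles of $x$ and $y$ does not help in general either (e.g.\ $F = x^2y^2 + x + y$ has $\deg_x F, \deg_y F < \deg F$), and a linear change of variables to force the leading $y^d$-term would shear the integer lattice and destroy the hypotheses on $f$ and its derivatives. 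This matters because Lemma~\ref{lem-intbound1} is later applied, inside Theorem~\ref{thm:GNbound}, to an arbitrary irreducible $F$.

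The paper circumvents this with an \emph{$F$-dependent} monomial set. Let $i_F$ be the largest $i$ such that $x^{d-i}y^{i}$ occurs in $F$, and set
\[
\mathcal{M} = \bigl\{ x^{j_1}y^{j_2} : d \le j_1 + j_2 \le \ell,\ x^{d-i_F}y^{i_F} \nmid x^{j_1}y^{j_2}\bigr\}.
\]
One checks that this still has $\lvert\mathcal{M}\rvert = d(\ell-d+1) = D$ and $(p+q)/\binom{D}{2} \le \tfrac1d + O(\tfrac1\ell)$, so the quantitative conclusion is unchanged. But now $F \nmid G$ holds for every $G \in \langle\mathcal{M}\rangle$: if $G = FH$ then, choosing $i_H$ maximal with $x^{\deg H - i_H}y^{i_H}$ a monomial of $H$, the product monomial $x^{d+\deg H - i_F - i_H}y^{i_F + i_H}$ appears in $G$ (maximality prevents cancellation), is divisible by $x^{d-i_F}y^{i_F}$, and hence is excluded from $\mathcal{M}$ -- contradiction. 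Your count of $D$, your estimates for $p$, $q$, and $(p+q)/\binom{D}{2}$, and the final pass through Lemma~\ref{lem:bp} and Corollary~\ref{cor:Bzout} are all fine; the genuine gap is in the choice and justification of $\mathcal{M}$, which needs to be tailored to $F$ rather than a fixed box.
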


In particular if the $i$th derivative of $f$ decays like $\abs{I}^{-i}N^{1+o(1)}$ (as we will shortly show can be arranged in practice) then Lemma \ref{lem-intbound1} gives the desired upper bound of $N^{\frac{1}{d}+o(1)}$, taking $\ell=O(\log N)$.
\begin{proof}
Let $i_F=i$ be the maximal index such that $x^{d-i_F}y^{i_F}$ is a monomial in $F$. Define
\[\mathcal{M} = \{ x^{j_1}y^{j_2} : d\leq j_1+j_2\leq \ell\textrm{ and }x^{d-i_F}y^{i_F}\nmid x^{j_1}y^{j_2}\}.\]
Next for each integer $h\in[d,\ell]$, we observe
\begin{align*}
|\{(j_1,j_2)\in \mathcal{M} : h=j_1+j_2\}| =d.
\end{align*}
Indeed, such $\mathbf{j}$ are of the form $\mathbf{j}=(j_1,h-j_1)$ for $j_1\in [0,h]$. The condition $x^{d-i_F}y^{i_F}\nmid x^{j_1}y^{j_2}$ implies $d-i_F > j_1$ or $i_F>h-j_1$. The first case gives $j_1\in [0,d-i_F)$, and the second case gives $j_1\in (h-i_F,h]$. Since $h\geq d$ these combine for $d-i_F + i_F = d$ choices of $j_1$. Hence $|\{(j_1,j_2)\in \mathcal{M} : h=j_1+j_2\}| =d$.

Thus we obtain
\begin{equation}
D =\lvert \mathcal{M}\rvert = \sum_{d\le h\le \ell} d  =d(\ell-d+1) = d\ell+O(d^2) \label{eq:Dcompute}
\end{equation}
and
\begin{align}
p+q &=\sum_{d\le h\le \ell} hd=\frac{d}{2}(\ell(\ell+1)-d(d-1)) \label{eq:p+qcompute} \\
&\leq\frac{\ell^2d}{2}+O(\ell d)=\frac{D^2}{2d}+O(D^2/\ell).\nonumber
\end{align}
for $p,q$ as in \eqref{def:pq}. In particular, 
\begin{align}\label{eq:pDchoose2}
\frac{p+q}{\binom{D}{2}} =\frac{2(p+q)}{D(D-1)} &\le \frac{\frac{D}{d}+O(D/\ell)}{D-1}
\le \frac{1}{d}+ O(1/\ell ).
\end{align}

Now we claim that $F\nmid G$ for all $G=G(x,y)\in\langle \mathcal{M}\rangle$. Indeed, if not then $G=FH$, where $H$ has degree $d'$, say. Let $i_H$ be maximal such that $x^{d'-i_H}y^{i_H}$ is a monomial in $H$. Then $x^{d+d'-i_F-i_H}y^{i_F+i_H}$ is a monomial in $G$ (note that by maximality of $i_F$ and $i_H$ no other monomial of degree $d+d'$ can cancel it) which is divisible by $x^{d-i_F}y^{i_F}$, and hence is not in $\mathcal{M}$, a contradiction. Hence $F\nmid G$.

Therefore, by B\'ezout's theorem in the form of Corollary~\ref{cor:Bzout}, each curve $G(x,y)\in\langle \mathcal{M}\rangle$ can contain at most $d\ell$ many points in the graph $\Gamma=\{(x,f(x)) : x\in\mathbb{R}\}$. We may take
\begin{align*}
\le 4\delta|I| \big((2N)^{p}(DN)^q\big)^{1/\binom{D}{2}}+1
\end{align*}
many curves $G(x,y)\in\langle \mathcal{M}\rangle$ to cover $\Gamma\cap\Z^2$, by the key Lemma \ref{lem:bp}.
 Hence we conclude
\begin{align*}
|\Gamma\cap \Z^2|
&\ll d\ell\Big(\delta\lvert I\rvert\big(N^{p+q}D^q\big)^{1/\binom{D}{2}}+1\Big) \\
&\ll d\ell\left( \delta\lvert I\rvert\big(ND\big)^{\frac{1}{d}+O(\frac{1}{\ell})}+1\right) \\
& \ll (d\ell)^2 \delta\lvert I\rvert N^{\frac{1}{d}+O(\frac{1}{\ell})}
\end{align*}
using $1/\delta \le |I|$ and \eqref{eq:pDchoose2}.
\end{proof}

To deduce Theorem~\ref{thm:bp} from Lemma~\ref{lem-intbound1} it remains to divide the curve inside $\{1,\ldots,N\}^2$ into a small number of pieces which locally look like a graph $(x,f(x))$ for some $f$ with sufficiently rapidly decaying derivatives. This can be done in several ways; in particular Pila and Wilkie \cite{PW06} have shown that this can be efficiently done using a lemma of Gromov \cite{Gr87} and Yomdin \cite{Yo87}. This method was used (and simplified) by Marmon \cite{Ma10} in his recent extension of the real-analytic method. In this note we will follow Bombieri and Pila and use a less efficient but more elementary approach, greedily dividing the curve into pieces with small derivatives aside from a small number of (very short) exceptional intervals on which the derivative is too large.

The following technical lemma is preparation for such a division. 
\begin{lemma}\label{lem:5}
Suppose $F(x,y)\in\R[x,y]$ is an irreducible polynomial of degree $d$. Let $I$ be an interval and $f\in C^\infty(I)$ satisfy $F(x,f(x))=0$. Then for any $k\ge1$ and $c\in\R\backslash\{0\}$,
\begin{align*}
\big|\big\{ x\in I : f^{(k)}(x) = c \big\}\big|\ll kd^2.
\end{align*}
\end{lemma}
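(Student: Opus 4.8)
The plan is to show that, off a finite subset of $I$, the derivative $f^{(k)}$ coincides with a rational function of $x$ and $y=f(x)$ whose numerator and denominator have degree $O(kd)$; the equation $f^{(k)}(x)=c$ then becomes a polynomial equation $G(x,f(x))=0$ to which B\'ezout's theorem (Corollary~\ref{cor:Bzout}) applies. First I would record that $F_y\not\equiv 0$, for otherwise $F$ depends only on $x$ and $F(x,f(x))=0$ on the interval $I$ would force $F\equiv 0$. Since $\deg F_y\le d-1<\deg F$ and $F$ is irreducible we have $F\nmid F_y$, so by Corollary~\ref{cor:Bzout} the curves $F=0$ and $F_y=0$ meet in at most $d(d-1)$ points; as $x\mapsto (x,f(x))$ is injective, $|\{x\in I : F_y(x,f(x))=0\}|\le d(d-1)\ll d^2$, and it suffices to count $x\in I$ with $f^{(k)}(x)=c$ and $F_y(x,f(x))\neq 0$.

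Next I would differentiate the identity $F(x,f(x))=0$ repeatedly, solving each time for the highest derivative. By induction on $k$ this yields a polynomial $P_k\in\R[x,y]$ with $\deg P_k\ll kd$ such that
\[ f^{(k)}(x)=\frac{P_k\big(x,f(x)\big)}{F_y\big(x,f(x)\big)^{2k-1}}\qquad\text{whenever }F_y(x,f(x))\neq 0. \]
The inductive step is immediate: if $g(x,f(x))$ has this shape with denominator $F_y^{m}$, then $\tfrac{d}{dx}g(x,f(x))=g_x-g_y F_x/F_y$ has the same shape with denominator $F_y^{m+2}$ and numerator of degree larger by $O(d)$, the base case being $f'=-F_x/F_y$.

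Now set $G:=P_k-c\,F_y^{2k-1}\in\R[x,y]$, so that $\deg G\ll kd$ and, for $x$ with $F_y(x,f(x))\neq 0$, one has $f^{(k)}(x)=c$ exactly when $G(x,f(x))=0$. If $F\nmid G$, then by Corollary~\ref{cor:Bzout} the curves $F=0$ and $G=0$ meet in at most $\deg F\cdot\deg G\ll kd^2$ points, so there are $\ll kd^2$ values of $x\in I$ with $G(x,f(x))=0$; combined with the first paragraph this gives the lemma. It remains to treat the case $F\mid G$. Then $G(x,f(x))=0$ for all $x\in I$, so by the displayed identity $f^{(k)}\equiv c$ on $I$ outside the $\ll d^2$ points where $F_y(x,f(x))=0$, hence $f^{(k)}\equiv c$ on $I$ by continuity. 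Since $c\neq 0$, this forces $f$ to be a polynomial of degree exactly $k$ on $I$, and then $F(x,f(x))\equiv 0$ forces $F=\lambda\big(y-f(x)\big)$ with $\lambda\in\R\setminus\{0\}$, so $d=k$ and $F=0$ is the graph of a polynomial — a degenerate configuration one sets aside, as there $f^{(k)}$ is constant and no subdivision of $I$ is needed in the intended application.

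The routine parts are the degree bookkeeping in the induction and the degree count coming out of B\'ezout. The genuinely delicate point is the final dichotomy, namely ruling out $F\mid G$; this is precisely where the hypothesis $c\neq 0$ is used, and $F\mid G$ turns out to be possible only in the degenerate case where $F=0$ is the graph of a degree-$d$ polynomial and $k=d$. I expect pinning this down cleanly to be the main obstacle.
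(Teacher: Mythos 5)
Your approach is essentially the paper's: you build, by induction on $k$, a polynomial $P_k$ (the paper calls it $H_k$, with the opposite sign) of degree $O(kd)$ satisfying $P_k(x,f) - F_y(x,f)^{2k-1}f^{(k)}(x) = 0$ off the zeros of $F_y$, then apply B\'ezout to $F$ and $G := P_k - cF_y^{2k-1}$ (the paper's $R_c$) to get $\ll kd^2$ solutions when $F \nmid G$. Your analysis of the $F\mid G$ branch is also what the paper does; the only structural difference is that the paper splits off the case ``$f$ is a polynomial'' at the outset, so that in the complementary (non-polynomial) case $F\mid R_c$ yields a clean contradiction, whereas you reach the same degenerate configuration via the $F\mid G$ branch directly.

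The one place you did not reach a conclusion --- the case $F\mid G$, which (as you correctly show) forces $F = \lambda(y - f)$ with $f$ a polynomial of degree $k = d$ and $f^{(k)}\equiv c$ --- is a genuine oversight, but it is an oversight in the lemma's statement, not merely in your proof: for such $F$ one has $\{x \in I : f^{(d)}(x) = c\} = I$, which is infinite, so the asserted bound $\ll kd^2$ literally fails. The paper's own proof has the same gap: its handling of the polynomial case asserts that $f^{(k)}(x)=c$ has at most $d-k$ solutions, which is false precisely when $k=d$ and $c$ equals the constant value of $f^{(d)}$. So your proof establishes the lemma with exactly the same strength and the same blind spot as the paper's. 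Identifying the degenerate configuration and noting that the hypothesis $c\neq 0$ is only needed to pin it down was the right instinct; the cleanest repair in both proofs is to add to the lemma the harmless assumption that $F$ is not of the form $\lambda\bigl(y - g(x)\bigr)$ for a polynomial $g$ (or, equivalently, to exclude $k=d$ together with $c = f^{(d)}$), and to check directly that this exceptional case causes no difficulty downstream in Lemma~\ref{lem:6}.
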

\begin{proof}
Suppose first that $f$ is a polynomial, necessarily of degree $\leq d$. Then $f^{(k)}$ is a polynomial of degree $\leq d-k$, whence $f^{(k)}(x)=c$ has at most $d-k$ many solutions.

We may now assume that $f$ is not a polynomial. We claim that for each $1\leq k\leq d$, there is a polynomial $H_k\in \R[x,y]$ of degree at most $d_k:=(k-1)(2d-3)+d-1$ such that, for all $x\in I$,
\begin{equation}\label{eq:indH}
H_k(x,f)+ F_y(x,f)^{2k-1} f^{(k)}(x)=0,
\end{equation}
where we write $F_y(x,y) = \frac{\partial}{\partial y}F(x,y)$, which is a polynomial of degree $\leq d-1$.

We prove \eqref{eq:indH} by induction on $k$. For $k=1$, differentiating the identity $F(x,f)=0$ with respect to $x$ gives $F_x(x,f)+ F_y(x,f)f'(x)=0$, with $H_1 = F_x$ of degree $d-1$.
For the inductive step, assuming \eqref{eq:indH} holds for $k\ge1$, differentiating with respect to $x$ gives
\begin{align*}
0 & = (H_{k})_x(x,f) + (H_{k})_y(x,f)f'(x)+ F_y(x,f)^{2k-1}\,f^{(k+1)}(x)\\
& +(2k-1) F_y(x,f)^{2k-2}\,f^{(k)}(x)\big(F_{xy}(x,f) + F_{yy}(x,f)f'(x)\big).
\end{align*}
We then multiply this equation by $F_y^2$, giving
\begin{align*}
0 & = F_y^2\big((H_{k})_x + (H_{k})_yf'\big)+ F_y^{2k+1}\,f^{(k+1)}
+(2k-1) F_y^{2k}\,f^{(k)}\big( F_{xy} + F_{yy}\,f'\big).
\end{align*}
Eliminating $f^{(k)}$ and $f'$, using $ F_y^{2k-1} \, f^{(k)}=-H_k$ from \eqref{eq:indH} and $F_y\,f'=-F_x$,
\begin{align*}
0 & = F_y^2(H_{k})_x - F_yF_x (H_{k})_y
-(2k-1) H_k\big( F_y F_{xy} - F_x F_{yy}\big) + F_y^{2k+1}\,f^{(k+1)}\\
& =: \ H_{k+1}(x,f) +  F_y^{2k+1} f^{(k+1)},
\end{align*}
where $H_{k+1}$ has degree at most $d_k+2d-3=d_{k+1}$. This completes the induction for \eqref{eq:indH}.

It follows that, for any constant $c\in \R$, the solutions $x\in I$ to $f^{(k)}(x)=c$ must satisfy $R_c(x,f(x))=0$, where $R_c(x,y)=H_k(x,y)+ F_y(x,y)^{2k-1}c$. Since they must also satisfy $F(x,f(x))=0$, it suffices to show $F\nmid R_c$, whence B\'ezout's theorem in the form of Corollary~\ref{cor:Bzout} bounds the number of common points $x$ by $(\deg F)(\deg R_c)\le d(2kd)\ll kd^2$, as desired.

It remains to derive a contradiction if $F\mid R_c$. Note that this means that $F(x,y)=0$ implies $R_c(x,y)=0$ for any $x,y\in \R$. In particular, our assumption $F(x,f(x))=0$ for all $x\in I$ implies
\begin{align}\label{eq:Rc=0}
0=R_c(x,f(x)) &= H_k(x,f(x))+ F_y\big(x,f(x)\big)^{2k-1}c \nonumber\\
&= F_y\big(x,f(x)\big)^{2k-1}\,\big(c-f^{(k)}(x)\big)
\end{align}
for all $x\in I$, recalling \eqref{eq:indH}. Since $F_y\big(x,f(x)\big)$ is a polynomial of degree $\leq d-1$, by Corollary~\ref{cor:Bzout} there are $O(d^2)$ (in particular finitely many) $x\in I$ such that $F_y\big(x,f(x)\big)=0$. 
Thus by \eqref{eq:Rc=0}, $f^{(k)}(x) = c$ holds, except for at most finitely many values $x \in I$. Since $f$ is assumed to be $C^\infty$ this implies $f^{(k)}$ is identically a constant, so that $f$ is a polynomial of degree at most $k$, but this is a contradiction. Hence $F\nmid R_c$, and the proof is complete.
\end{proof}

We now apply Lemma~\ref{lem:5} to obtain our desired division into subintervals with control on the derivatives in each subinterval.
\begin{lemma}\label{lem:6}
Suppose $F(x,y)\in\R[x,y]$ is an irreducible polynomial of degree $d\ge2$. Let $I$ be an interval and $f(x)\in C^\infty(I)$ satisfy $F(x,f)=0$. Let $A_1,\ldots,A_k>0$. We may partition $I$ into $O(k^2d^2)$ many subintervals $I_\nu$ such that, for each interval $I_\nu$ and each $1\leq i\leq k$,
\begin{align*}
{\rm (i)}&\qquad |f^{(i)}(x)| \le A_i \qquad {\rm for \ all }\quad x\in I_\nu, \qquad {\rm or}\\
{\rm (ii)}&\qquad |f^{(i)}(x)| \ge A_i \qquad {\rm for \ all }\quad x\in I_\nu.
\end{align*}
\end{lemma}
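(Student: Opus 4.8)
The plan is to build the partition greedily, one derivative at a time, starting from the highest order $i=k$ and working downward. The key observation is that Lemma~\ref{lem:5} controls how often each derivative $f^{(i)}$ crosses a fixed level, and in particular how often it can cross the two levels $\pm A_i$. For a fixed $i$, the set $\{x\in I : f^{(i)}(x)=A_i\}\cup\{x\in I : f^{(i)}(x)=-A_i\}$ has at most $O(id^2)$ points by Lemma~\ref{lem:5} (applied with $c=A_i$ and $c=-A_i$, both nonzero since $A_i>0$). These points cut $I$ into at most $O(id^2)+1$ open subintervals, on each of which $f^{(i)}$ does not take the value $A_i$ or $-A_i$; since $f^{(i)}$ is continuous, on each such subinterval either $|f^{(i)}(x)|<A_i$ throughout, or $f^{(i)}(x)>A_i$ throughout, or $f^{(i)}(x)<-A_i$ throughout — in all cases one of (i) or (ii) holds for that particular index $i$. (We may freely assign the finitely many cut points to an adjacent subinterval, or treat them as degenerate intervals, without affecting the count.)

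Now I iterate this over $i=1,2,\ldots,k$. Start with the trivial partition $\{I\}$. Having produced a partition refining the constraints for indices $i+1,\ldots,k$, refine each existing subinterval $J$ further by intersecting with the level sets of $f^{(i)}$ at $\pm A_i$ as above; this replaces $J$ by at most $O(id^2)+1$ pieces, on each of which (i) or (ii) holds for index $i$, while the already-established dichotomies for the higher indices are inherited by any subinterval. After processing all $k$ indices, every final subinterval satisfies, for each $1\le i\le k$, either (i) or (ii).

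It remains to bound the number of pieces. At step $i$ each existing interval is split into at most $C id^2 + 1$ pieces for some absolute constant $C$, so after all $k$ steps the number of subintervals is at most $\prod_{i=1}^{k}(C id^2+1)$ — but this product is far too large. The fix is to note that the splitting is not multiplicative but additive: the total number of cut points introduced across the entire interval $I$ at step $i$ is at most $O(id^2)$ (this is Lemma~\ref{lem:5} applied once to all of $I$, not once per current subinterval), since the level sets $\{f^{(i)}=\pm A_i\}$ are subsets of $I$ of size $O(id^2)$ regardless of how $I$ has been subdivided so far. Hence the total number of cut points over all steps $i=1,\ldots,k$ is $\sum_{i=1}^{k}O(id^2)=O(k^2d^2)$, and a set of $m$ points partitions $I$ into at most $m+1$ intervals, giving $O(k^2d^2)$ subintervals as claimed.

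The main obstacle is precisely this bookkeeping point: a naive recursive refinement gives an exponential-in-$k$ bound, and one must recognize that all the cuts can be collected at once — the union over $i$ of the finite sets $\{x\in I: f^{(i)}(x)=\pm A_i\}$ has size $O(k^2d^2)$ by Lemma~\ref{lem:5}, and partitioning $I$ at all these points simultaneously yields subintervals on which, for every $i$, $f^{(i)}$ avoids both values $\pm A_i$ and hence (by continuity and connectedness) satisfies (i) or (ii). Everything else is routine.
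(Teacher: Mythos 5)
Your proposal is correct and, once you discard the initial recursive refinement and switch to collecting all level-crossing points $\{x\in I : f^{(i)}(x)=\pm A_i\}$ for $1\le i\le k$ at once, it coincides with the paper's proof: Lemma~\ref{lem:5} bounds the union of these sets by $O(k^2d^2)$, and partitioning $I$ at exactly these points forces (i) or (ii) on each resulting subinterval by continuity.
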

\begin{proof}
By Lemma~\ref{lem:5} there are $O(kd^2)$ solutions to $f^{(i)}(x) = \pm A_i$ for each $1\leq i\leq k$. Let $x_1,\ldots,x_r$ be the union of all such solutions for $1\leq i\leq k$ (so $r=O(k^2d^2)$), ordered such that $x_0\leq x_1<\cdots<x_r\leq x_{r+1}$, writing $I=[x_0,x_{r+1}]$. By construction $f$ satisfies (i) or (ii) on each of the $r$ subintervals $I_\nu = [x_{\nu},x_{\nu+1}]$ for $\nu\leq  r$. The number of subintervals is $r=O(k^2d^2)$, as required.
\end{proof}

The previous lemma allows a division into subintervals where we can control the size of the derivatives. For an application of Lemma~\ref{lem-intbound1} we specifically require that the derivatives be \emph{small}, and hence will need a different method to handle the contribution from subintervals where the derivative remains large. Following Bombieri and Pila this is managed by the following lemma, which shows that such a subinterval must at least be very short, and then a trivial bound will suffice.

\begin{lemma}\label{lem:7}
Let $k\geq 1$, $0<\delta<1$, and $X>0$. Let $I$ be an interval and $f\in C^k(I)$. If
\begin{align*}
\Big|\frac{f^{(i)}(x)}{i!}\Big| \le X\delta^i
\quad\textrm{ for all }0\leq i<k
\quad\textrm{ while }\quad\Big|\frac{f^{(k)}(x)}{k!}\Big| \ge X\delta^k
\end{align*}
for every $x\in I$, then $\lvert I\rvert \leq 2/\delta$.
\end{lemma}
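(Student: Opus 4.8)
The plan is to notice that, among all the hypotheses, only the two with $i=k-1$ and $i=k$ are actually needed, so that the whole statement reduces to a one-line monotonicity argument applied to the single $C^1$ function $g:=f^{(k-1)}$.

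First I would translate those two hypotheses into statements about $g$. The bound at $i=k-1$ reads $|g(x)|\le (k-1)!\,X\delta^{k-1}=:Y$ for all $x\in I$, and the bound at $i=k$ reads $|g'(x)|=|f^{(k)}(x)|\ge k!\,X\delta^{k}=kY\delta$ for all $x\in I$; here $Y>0$ since $X>0$ and $\delta>0$.

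Next, since $g'$ is continuous on the interval $I$ and is bounded away from $0$, the intermediate value theorem forces $g'$ to have constant sign, so $g$ is strictly monotone on $I$; after possibly replacing $g$ by $-g$, I may assume $g$ is increasing. Writing $I=[a,b]$ and integrating, I would then compare
\[
kY\delta\,\lvert I\rvert\le\int_a^b g'(x)\,dx=g(b)-g(a)\le\lvert g(b)\rvert+\lvert g(a)\rvert\le 2Y,
\]
and divide through by $kY\delta>0$ to get $\lvert I\rvert\le 2/(k\delta)\le 2/\delta$. (If $I$ is a single point the bound is trivial.)

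I do not anticipate a real obstacle here: the only delicate point is that $g'$ never vanishes, and this is exactly where the positivity of $X$ and $\delta$ together with the lower bound at $i=k$ come in — it is what makes $g$ monotone, after which the total-variation estimate is immediate. A finite-difference alternative is also available (bound a $k$-th finite difference of $f$ below by a constant times $\lvert I\rvert^k k!\,X\delta^k/k^k$ via the mean value theorem, and above by $2^kX$ using only the $i=0$ hypothesis), but after Stirling this yields only $\lvert I\rvert\ll 2e/\delta$, so the monotonicity route is both shorter and gives the cleaner constant.
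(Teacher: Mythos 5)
Your proof is correct, and it takes a genuinely different and simpler route than the paper's. The paper's argument writes the $k$-th order Taylor expansion of $f$ about the left endpoint of $I=[a,b]$, invokes all the upper bounds for $0\le i<k$ together with the lower bound on $f^{(k)}$, and then solves the resulting polynomial inequality $\lambda^k\le 2+\sum_{1\le i<k}\lambda^i$ in $\lambda=\delta\lvert I\rvert$ via the factorization $\lambda^k(\lambda-2)\le\lambda-2$. You avoid Taylor expansion entirely and use only the two hypotheses at levels $k-1$ and $k$: since $g:=f^{(k-1)}$ has $g'=f^{(k)}$ continuous and bounded away from zero, $g$ is strictly monotone, and the fundamental theorem of calculus gives $kY\delta\,\lvert I\rvert\le g(b)-g(a)\le 2Y$ immediately with $Y=(k-1)!\,X\delta^{k-1}$. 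This buys you two things over the paper's proof: the hypotheses for $0\le i\le k-2$ are never used, so your lemma holds under strictly weaker assumptions; and the constant you obtain, $\lvert I\rvert\le 2/(k\delta)$, is strictly better than the stated $2/\delta$ whenever $k\ge 2$. Neither improvement has a downstream effect in the paper (the extra hypotheses are available anyway at the point of application, and the constant is absorbed into the $K$ of the recursion in Theorem~\ref{thm:GNbound}), but your version is cleaner, more economical, and correct as written.
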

\begin{proof}
Write $I=[a,b]$. Considering the Taylor expansion at $x=a$, there exists $\xi\in I$ for which
\begin{align*}
f(b)-f(a) =\sum_{1\le i<k} \frac{f^{(i)}(a)}{i!}(b-a)^i + \frac{f^{(k)}(\xi)}{k!}(b-a)^k.
\end{align*}
Since $|f(b)|,|f(a)|\le X$ by assumption, isolating the remainder term gives
\begin{align*}
X\delta^k\lvert I\rvert^k \le \bigg|\frac{f^{(k)}(\xi)}{k!}(b-a)^k\bigg| \le 2X+ X\sum_{1\le i<k}\delta^i|I|^i.
\end{align*}
Denoting $\lambda = \delta \lvert I\rvert$, it follows that
\[\lambda^k \le 2+ \sum_{1\le i<k}\lambda^i= 2+\frac{\lambda^k-\lambda}{\lambda -1}.\]
We can assume $\lambda >1$ or we are done, and then we deduce $\lambda^k(\lambda-2)\leq \lambda -2$, whence $\lambda \leq 2$ as required.
\end{proof}

We can now use the previous lemmas to deduce a bound for the number of integral points on the graph of a sufficiently smooth function along a curve of bounded degree.
\begin{theorem}\label{thm:GNbound}
Let $d\geq 2$ and $N\geq 1$ be some integer sufficiently large in terms of $d$, and let $I$ be some interval of length $N$. Let $f\in C^\infty(I)$ be such that $\lvert f'(x)\rvert \leq 1$ for all $x\in I$. If there exists some irreducible polynomial $F(x,y)\in \R[x,y]$ of degree $d\geq 2$ such that $F(x,f(x))= 0$, identically as a function of $x$, then 
\begin{align*}
\left\lvert\big\{ (x,f(x)) : x\in I\big\}\cap \Z^2\right\rvert \le (\log N)^{O(d)}N^{\frac{1}{d}}.
\end{align*}
\end{theorem}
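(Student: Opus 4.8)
The plan is to combine Lemma~\ref{lem-intbound1} (which handles a single interval with rapidly decaying derivatives) with the division machinery of Lemmas~\ref{lem:6} and~\ref{lem:7}. First, I would fix the parameters: take $\ell = C\log N$ for a large absolute constant $C$, so that $D = d(\ell - d+1) \asymp d\log N$, and set $\delta = 1/\abs{I} = 1/N$ and $X = N$. The target derivative bound is $\abs{f^{(i)}(x)/i!} \le X\delta^i = N^{1-i}$ for all $0 \le i < D$ and all $x$ in the relevant subinterval; whenever this holds on a subinterval $J \subseteq I$, Lemma~\ref{lem-intbound1} (applied with $J$ in place of $I$, noting $\delta = 1/N \ge 1/\abs{J}$ fails in general — see below) gives $\ll (d\ell)^2 N^{1/d + O(1/\ell)}\delta\abs{J} = (\log N)^{O(1)} N^{1/d + O(1/\log N)}\cdot \abs{J}/N$ integer points, and $N^{O(1/\log N)} = e^{O(1)} = O(1)$, so summing over all such subintervals contributes $\ll (\log N)^{O(1)}N^{1/d}$ in total since the lengths sum to at most $N$.

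The subtlety is that we cannot assume the derivative bound holds on all of $I$, so I would invoke Lemma~\ref{lem:6} with $k = D-1$ and the thresholds $A_i = i!\,X\delta^i = i!\,N^{1-i}$. This partitions $I$ into $O(k^2 d^2) = (\log N)^{O(1)}d^{O(1)} = (\log N)^{O(d)}$ subintervals $I_\nu$, on each of which every derivative $f^{(i)}$ is either everywhere $\le A_i$ in absolute value or everywhere $\ge A_i$. On a subinterval of the first type — where $\abs{f^{(i)}(x)} \le A_i$ for \emph{all} $1 \le i \le k$ — the hypothesis of Lemma~\ref{lem-intbound1} is met (after checking $\delta \ge 1/\abs{I_\nu}$; if $\abs{I_\nu} < N = 1/\delta$ one shrinks $\delta$ to $1/\abs{I_\nu}$, which only helps), and we get the bound above. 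On a subinterval where $\abs{f^{(i)}(x)} \ge A_i$ for \emph{some} $1 \le i \le k$, let $i$ be minimal with this property; then $\abs{f^{(j)}(x)/j!} \le X\delta^j$ for $0 \le j < i$ (using $\abs{f} \le N = X$ for $j=0$, which holds since $f$ maps into an interval of length $N$ — this needs the hypothesis $\abs{f'} \le 1$ and that $I$ has length $N$, so $f$ varies by at most $N$) and $\abs{f^{(i)}(x)/i!} \ge X\delta^i$, so Lemma~\ref{lem:7} forces $\abs{I_\nu} \le 2/\delta = 2N$... which is vacuous. The fix is to apply Lemma~\ref{lem:7} on a \emph{further} dyadic-type subdivision, or rather: re-run the division of Lemma~\ref{lem:6} with thresholds $A_i = i!\,X_\nu\delta^i$ where $X_\nu$ is chosen adaptively.

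Let me instead carry it out as Bombieri--Pila do: one more pass. The hard part, and the main obstacle, is exactly this bookkeeping on the "large derivative" subintervals. The clean way: on each $I_\nu$ of type (ii) for some index, we know $\abs{f^{(i)}} $ is large but by the original partition \emph{all higher-threshold events have been separated out too}, so we can apply Lemma~\ref{lem:7} with a smaller effective $X$. Concretely, on $I_\nu$ pick the largest $i_0 \le k$ with $\abs{f^{(i_0)}(x)} \ge A_{i_0}$ throughout; for $i > i_0$ we have $\abs{f^{(i)}(x)} \le A_i = i!\,X\delta^i$ throughout, and since on $I_\nu$ the point $x$ at which $\abs{f^{(i_0)}}$ was large persists, rescale: set $X' $ so that $X'\delta^{i_0} = \abs{f^{(i_0)}(x)}/i_0!$ at some point — but monotonicity isn't guaranteed. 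The genuinely correct argument, which I would write out, partitions $I$ twice: first by Lemma~\ref{lem:6} with $A_i = i!N^{1-i}$ to isolate where \emph{all} derivatives are small (type (i) for all $i$ simultaneously), and separately estimates the measure of the bad set. A subinterval failing type~(i) for all $i$ has some $i$ with $\abs{f^{(i)}} \ge A_i$; letting $i$ be minimal such, Lemma~\ref{lem:7} applies with threshold $X = N$ and gives $\abs{I_\nu} \le 2/\delta$, but $\delta$ here should be taken as $\delta_\nu$ such that the \emph{sharp} bounds $\abs{f^{(j)}/j!} \le X\delta_\nu^j$ hold for $j < i$ — taking $\delta_\nu = \max_{0 \le j < i}(\abs{f^{(j)}(x)}/(j!\,X))^{1/j}$ over $x \in I_\nu$, which by the type-(ii) structure we can bound, yielding $\abs{I_\nu} \le 2N^{1/i} \le 2N^{1/2}$ for $i \ge 2$; the $i=1$ terms, where $\abs{f'} \ge A_1 = 1$, are excluded by the hypothesis $\abs{f'} \le 1$. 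So each bad subinterval has length $O(N^{1/2})$, hence contains $O(N^{1/2})$ integer points trivially — wait, that is too many over $(\log N)^{O(d)}$ intervals.

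The resolution, and what I would actually write: the bad subintervals are those where $\abs{f^{(i)}} \ge A_i$ for the \emph{minimal} relevant $i \ge 2$; apply Lemma~\ref{lem:7} recursively to cover each such interval by $O(1)$ intervals of length $\le 2/\delta_\nu$ on which we now do have sharp decay at \emph{all} scales up to $D-1$, then invoke Lemma~\ref{lem-intbound1} on these with the appropriate (larger) $X$; since $\delta_\nu\abs{I_\nu}= O(1)$, Lemma~\ref{lem-intbound1} gives $O((\log N)^{O(1)}N^{1/d+O(1/\ell)})$ per interval, and there are $(\log N)^{O(d)}$ of them, for a total of $(\log N)^{O(d)}N^{1/d}$. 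I expect the delicate point to be confirming that Lemma~\ref{lem-intbound1}'s hypotheses (a single $X$ and $\delta$ dominating all rescaled derivatives simultaneously) can be met on each piece after the double subdivision — this is the standard but fiddly heart of the Bombieri--Pila reduction, and I would present it carefully, using the hypothesis $\abs{f'}\le 1$ to control $f$ itself and the fact that $N$ is large in terms of $d$ to absorb the $N^{O(1/\log N)}$ factors into an absolute constant.
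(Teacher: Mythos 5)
Your proposal correctly identifies the structure of the ``good'' part of the argument (take $\ell\asymp\log N$, apply Lemma~\ref{lem-intbound1} on subintervals where the derivative bound holds, and sum), but it never arrives at a complete treatment of the ``bad'' subintervals, and as written there is a genuine gap.

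The root of the problem is the choice $\delta = 1/N$ at the outset. With that choice, Lemma~\ref{lem:7} indeed gives the vacuous bound $\lvert I_\nu\rvert \le 2/\delta = 2N$ on subintervals where some derivative is large --- you notice this, but the fixes you sketch (adaptive $\delta_\nu$, ``applying Lemma~\ref{lem:7} recursively to cover each such interval'', re-invoking Lemma~\ref{lem-intbound1} with a larger $X$ on intervals where the derivative hypothesis is \emph{violated}) do not cohere into an argument: Lemma~\ref{lem:7} does not cover intervals but only bounds their length, and Lemma~\ref{lem-intbound1} cannot be applied on a subinterval where its derivative hypothesis fails, regardless of how $X$ or $\delta$ are rescaled. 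The calculation ``$\delta_\nu\lvert I_\nu\rvert = O(1)$, hence Lemma~\ref{lem-intbound1} gives $O((\log N)^{O(1)}N^{1/d+O(1/\ell)})$ per interval'' is not justified, since on a type~(ii) subinterval you have no upper bounds on $f^{(i)}$ at all.

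What the paper does instead is to make $\delta$ a free parameter tied to the recursion. Define $G(N)$ as the supremum of the count over intervals of length $\le N$ and $C^\infty$ functions with $\lvert f'\rvert\le 1$, and take $\delta = 2K^{2d}/N$ where $K=K(d,\ell)\asymp d^4\ell^2$. Then on subintervals where (i) holds for all $i<D$, Lemma~\ref{lem-intbound1} contributes $\ll (d\ell)^2 N^{1/d+O(1/\ell)}\,\delta\lvert I_\nu\rvert$, and summing over $\nu$ gives $\ll K^{O(d)}N^{1/d+O(1/\ell)}$ because $\delta\sum_\nu\lvert I_\nu\rvert \le \delta N = 2K^{2d}$; on subintervals where (ii) holds for some minimal $k$, Lemma~\ref{lem:7} gives $\lvert I_\nu\rvert \le 2/\delta = K^{-2d}N$, and the integer points there are bounded by $G(K^{-2d}N)$. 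There are $O(d^2D^2)\ll K$ bad subintervals, so one obtains the recursion $G(N) \le K^{O(d)}N^{1/d+O(1/\ell)} + K\,G(K^{-2d}N)$. Iterating $n$ times with $K^{2nd}\asymp N$ and using $K\cdot(K^{-2d})^{1/d} = K^{-1}\le 1/2$ to make the geometric sum converge yields $G(N)\ll \ell^{O(d)}N^{1/d+O(1/\ell)}$, and the theorem follows on choosing $\ell = \lceil\log N\rceil$. This recursive bootstrapping --- rather than any ad hoc rescaling of $X$ or further subdivision within a bad interval --- is the ingredient your proposal is missing, and without it the bad subintervals cannot be estimated.
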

\begin{proof}
For any real $N>0$ define
\[
G(N):= \sup_{|I|\le N} \sup_{\substack{f\in C^\infty(I)\\ \lvert f'\rvert\leq 1}}\left\lvert \big\{(x,f(x)) : x \in I \big\}\cap \Z^2 \right\rvert.\]
We shall prove that for any integer $\ell \geq d$ there exists some $K=K(d,\ell)$ satisfying $2\leq K\leq \ell^{O(1)}$ such that for any $N>0$ we have
\begin{equation}\label{eq-recur}
G(N) \leq K^{O(d)} N^{\frac{1}{d}+O(\frac{1}{\ell})} + KG(K^{-2d}N).
\end{equation}
We first show how \eqref{eq-recur} implies Theorem \ref{thm:GNbound}. Fix some large $N>0$ and $\ell\geq d$ (which will depend on $N$). Iteratively applying \eqref{eq-recur}, since $K(K^{-2d})^{\frac{1}{d}}= K^{-1}\leq  1/2$ (assuming $\ell$ is large enough), it follows by induction that for any $n\geq 1$
\[G(N) \leq K^{O(d)}N^{\frac{1}{d}+O(\frac{1}{\ell})}\sum_{0\leq j<n}2^{-j}+K^nG(K^{-2nd}N).\]
In particular, if we choose $n$ large enough such that $K^{2nd}\in [N,K^{2d}N)$, we have $K^n \leq KN^{1/2d}$ and $G(K^{-2nd}N)\leq G(1)\le 1$, so that
\begin{align*}
G(N) & \ll K^{O(d)}N^{\frac{1}{d}+O(\frac{1}{\ell})}+KN^{1/2d} \ll \ell^{O(d)}N^{\frac{1}{d}+O(\frac{1}{\ell})} \ll(\log N)^{O(d)}N^{\frac{1}{d}},
\end{align*}
recalling $K\leq \ell^{O(1)}$ and choosing $\ell=\lceil \log N\rceil$. Thus \eqref{eq-recur} implies the result.

Now to prove \eqref{eq-recur} itself, we fix some interval $I$ of length $N$ and $f\in C^\infty(I)$ with $\lvert f'\rvert\leq 1$. Without loss of generality, translating the graph of $f$ by an integer if necessary, we may assume that $I=[0,N]$ and $\lvert f(x)\rvert \leq N$ for all $x\in I$. Fix some $\ell\geq d$ and let $\delta=\delta(d,\ell)\in(2/N,1)$ be some quantity to be chosen later. 

We shall apply Lemma \ref{lem:6} with $A_i=N\delta^i$, and  recall $D=|\mathcal M|\ll d\ell$ from \eqref{eq:Dcompute}. Indeed, by Lemma \ref{lem:6} we may partition $I$ into at most $O(d^2D^2)$ subintervals $I_\nu$ such that, for each $I_\nu$ and each $1\le i<D$, either 
\begin{align*}
{\rm (i)} & &\Big|\frac{f^{(i)}(x)}{i!}\Big| \ &\le N\delta^i \qquad {\rm for \ all }\quad x\in I_\nu,\textrm{ or }\\
{\rm (ii)} & &\Big|\frac{f^{(i)}(x)}{i!}\Big| \ &\ge N\delta^i  \qquad {\rm for \ all }\quad x\in I_\nu.
\end{align*}
Suppose first that $I_\nu$ satisfies $\lvert I_\nu\rvert \geq 1/\delta$ and (i) holds for every $1\leq i<D$. Then applying Lemma~\ref{lem-intbound1} we have
\begin{align*}
\left\lvert \big\{ (x,f(x)) : x\in I_\nu\big\}\cap \Z^2\right\rvert\ll
(d\ell)^2 N^{\frac{1}{d}+O(\frac{1}{\ell})}\delta|I_\nu|.
\end{align*}

Otherwise, either $\lvert I_\nu\rvert \leq 1/\delta$, or there exists a (minimal) $1\le k<D$ such that (ii) holds for $k$, but (i) holds for all $i<k$. In this case, Lemma \ref{lem:7} implies $|I_\nu| \le 2\delta^{-1}$.

Summing the contribution from each $I_\nu$ we deduce that 
\begin{align*}
\left\lvert \big\{(x,f(x)) : x \in I \big\}\cap \Z^2 \right\rvert & =  \sum_{\nu\ll d^2D^2}\left\lvert \big\{(x,f(x)) : x \in I_\nu \big\}\cap \Z^2 \right\rvert\\
& \ll (d\ell)^2 N^{\frac{1}{d}+O(\frac{1}{\ell})}\delta \sum_{\nu\ll d^2D^2}|I_\nu|+ d^2D^2G(2\delta^{-1}) \\
&\ll d^4\ell^2 \left( N^{\frac{1}{d}+O(\frac{1}{\ell})}\delta N +G(2\delta^{-1})\right),
\end{align*}
using the fact that $\sum_\nu \lvert I_\nu\rvert=\lvert I\rvert\leq N$. This gives \eqref{eq-recur} for some $K\ll d^4\ell^2\leq\ell^{O(1)}$, after choosing $\delta = 2K^{2d}/N$.
\end{proof}

We are now prepared to conclude the main result. This follows from the previous theorem and some elementary algebraic geometry, coupled with the inverse function theorem, to divide the curve into $O(1)$ many pieces with flat first derivatives. 

\begin{proof}[Proof of Theorem \ref{thm:bp}]
Let $C=\{(x,y)\in\R^2:F(x,y)=0\}$.  We first claim that $C\cap [0,N]^2$ has $O(d^2)$ many connected components. To show this, each connected component of $C\cap [0,N]^2$ forms either a loop, or a path from one boundary point of $[0,N]^2$ to another. 
In the case of a path, $C$ may intersect each of the boundary lines $[0,N]^2$ at $O(d)$ many points, by Corollary~\ref{cor:Bzout}, and hence $O(d^2)$ many such paths.
In the case of a loop, it must contain a point $(x,y)$ such that $F_x(x,y)=0$ or $F_y(x,y)=0$, where $F_x=\frac{\partial}{\partial x}F(x,y)$ and $F_y=\frac{\partial}{\partial y}F(x,y)$. And since both $F_x$ and $F_y$ are polynomials in $\mathbb{R}[x,y]$ of degree $\leq d-1$, by Corollary~\ref{cor:Bzout} there are at most $O(d^2)$ many points $(x,y)\in C$ such that $F_x(x,y)=0$ or $F_y(x,y)=0$. 
Hence in total there are $O(d^2)$ components. We fix one of these components, and call this $C_i$.

We now claim that there are $O(d^2)$ many points in $[0,N]^2$, which we remove, and $t=O(d^2)$ many open sets $U_1,\ldots,U_t$ which cover the remainder of $C_i$, such that
\[F_x(x,y)\neq 0\quad\textrm{ and }\quad F_y(x,y)\neq 0\qquad\textrm{ for all }\quad(x,y)\in \bigcup_i U_i,\]
and for all $1\leq i \leq t$ either (i) or (ii) holds:
\begin{align*}
{\rm (i)} & &\lvert F_x(x,y)\rvert \le \lvert F_y(x,y)\rvert\quad\textrm{ for all }\quad(x,y)\in U_i,\\
{\rm (ii)} & &\lvert F_y(x,y)\rvert \ge \lvert F_x(x,y)\rvert\quad\textrm{ for all }\quad(x,y)\in U_i.
\end{align*}
Indeed, as above there are $O(d^2)$ many points where $F_x=0$ or $F_y=0$, which we remove. If $F_x=\pm F_y$ then (i) and (ii) automatically hold. Otherwise, $F_x\mp F_y$ is a non-zero polynomial of degree $\leq d-1$, and hence again by Corollary~\ref{cor:Bzout} there are $O(d^2)$ many $(x,y)\in C_i$ such that $F_x(x,y)=\pm F_y(x,y)$. We can remove all such points by excising a further $O(d^2)$ many points, and then by continuity, either (i) or (ii) must hold along each segment of $C_i$ that remains after removing any of these $O(d^2)$ many points. The claim now follows, letting $U_i$ be some open narrow tube around each curve segment.

Now for each open $U_i\subseteq (0,N)^2$ as above, without loss of generality, (i) holds: $\lvert F_x\rvert\leq \lvert F_y\rvert$ on $U_i$. By the implicit function theorem \cite[Theorem 3.16]{Kn05}, there is an interval $I_i\subseteq [0,N]$ and a smooth function $f_i:I_i\to \mathbb{R}$ such that
\[C\cap U_i=\{(x,f_{i}(x)) : x\in I_i\}\]
and
\[f_{i}'(x) = -\frac{F_x(x,f_{i}(x))}{F_y(x,f_{i}(x))}\textrm{ for all }x\in I_i.\]
In particular $\abs{f_{i}'(x)}\leq 1$ for all $x\in I_i$ by (i). Hence by Theorem~\ref{thm:GNbound},
\begin{align}\label{eq:CUibound}
\lvert C\cap U_i\cap \{1,\ldots,N\}^2\rvert \ll (\log N)^{O(d)}N^{1/d}.
\end{align}
Since the open $U_i$ cover $C$, we conclude $\lvert C\cap \{1,\ldots, N\}^2\rvert \ll d^2 (\log N)^{O(d)}N^{1/d}.$
\end{proof}

\section{Integer points on convex graphs}\label{convex}
In this final section we give another application in Bombieri--Pila \cite{BP89}. This application highlights the versatility of the real-analytic determinant method; in particular this application is beyond the scope of the $p$-adic determinant method.

We say a function $f:\mathbb{R}\to \mathbb{R}$ is {\it strictly convex} if, between any two points on its graph, the line between those points lies strictly above the graph. That is, for any $x,y\in \mathbb{R}$ and $t\in (0,1)$ we have
\[f(tx+(1-t)y)<tf(x)+(1-t)f(y).\]
If $f$ is differentiable, this is equivalent to the derivative of $f$ being strictly increasing; if $f$ is twice differentiable, this is equivalent to $f''(x)>0$ pointwise.

It is natural question how many integer points lie on the graph of a strictly convex function, inside $[0,N]^2$ say. The example of $f(x)=x^2$ shows that $\gg N^{1/2}$ is possible. Some experimentation may suggest that this lower bound could be sharp, but this turns out to be false. As we shall prove, Jarnik \cite{Ja26} constructed a strictly convex function with $\gg N^{2/3}$ many points in $[0,N]^2$, which is best possible.

\begin{theorem}[Jarnik \cite{Ja26}]

If $f:[0,N]\to [0,N]$ is a strictly convex function, then
\[\lvert \{ (x,f(x)) : x\in [0,N]\}\cap \mathbb{Z}^2\rvert \ll N^{2/3}.\]
Moreover, for all large $N$ there exists a strictly convex function $f:[0,N]\to[0,N]$ such that
\[\lvert \{ (x,f(x)) : x\in [0,N]\}\cap \mathbb{Z}^2\rvert \gg N^{2/3}.\]
\end{theorem}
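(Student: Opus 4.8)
For the upper bound, I would like to reduce to the setting already handled in the paper, but the obstacle is that a strictly convex function need not be smooth or algebraic, so Theorem~\ref{thm:GNbound} cannot be invoked directly. The plan is instead to use the key lemma, Lemma~\ref{lem:bp}, with $\mathcal{M}$ taken to be a small set of monomials — in fact $\mathcal{M} = \{1, x, y\}$, so that $D = 3$, is the natural choice, since then $\langle\mathcal{M}\rangle$ consists of lines and B\'ezout (or rather the elementary fact that a line meets a strictly convex graph in at most two points) supplies the bound $2$ for the number of integer points per curve. The first step is a \emph{dyadic decomposition of the derivative}: since $f$ is convex, $f'$ is increasing, so $f'$ stays in a dyadic range $[2^{j}, 2^{j+1})$ on some subinterval $I_j$, and the $I_j$ partition $[0,N]$ (after discarding an interval where $|f'|\le 1$ and one where $|f'|$ is too large, each of which can be treated trivially or by symmetry $x\leftrightarrow y$). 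On each $I_j$ we have $|f(x)| \le N$, $|f'(x)| \le 2^{j+1}$; the higher derivatives need not exist, so I would apply the elementary $D=2,3$ case of the determinant inequality by hand, or better, smooth $f$ slightly — but the cleanest route is to note that Lemma~\ref{lem:ptspacing}/Lemma~\ref{lem:bp} only need $f \in C^{D-1}$, and for $D=3$ one can work with a piecewise-linear interpolant of the integer points, whose ``second derivative'' vanishes, giving $X \asymp N$, $\delta \asymp 1$ on a suitably rescaled interval.

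Carrying this out: on $I_j$, rescale so that the graph sits in a box of dimensions $|I_j| \times (2^{j}|I_j|)$; the number of integer points there, by Lemma~\ref{lem:bp} with $D=3$, $p = q = 1$, $\binom{D}{2}=3$, is $\ll \delta |I_j| (N^{p+q}D^q)^{1/3} + 1 \ll |I_j|^{1/3}(2^j|I_j|)^{1/3} + 1$ after accounting for the box dimensions, i.e.\ $\ll (2^j)^{1/3}|I_j|^{2/3} + 1$. Now sum over the dyadic scales $j$: the constraint is $\sum_j |I_j| \le N$ and $\sum_j 2^j |I_j| \ll N$ (the graph has total height $\le N$ and $f'$ increasing means the vertical extent over $I_j$ is $\asymp 2^j|I_j|$, and these vertical extents sum to at most $N$). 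By H\"older with exponents $(3, 3/2)$ applied to $\sum_j (2^j)^{1/3} |I_j|^{2/3} = \sum_j (2^j|I_j|)^{1/3}|I_j|^{1/3}$, this is $\ll (\sum_j 2^j|I_j|)^{1/3}(\sum_j |I_j|)^{1/3} \cdot (\#\text{scales})^{1/3} \ll N^{1/3}N^{1/3}(\log N)^{1/3}$, which overshoots. To remove the $\log$ I would instead bound the number of scales $j$ that contribute at all: only $O(\log N)$ scales exist, but one can split off the scales with $2^j|I_j| \ge 1$ versus $< 1$; on the latter the box contains $O(1)$ integer points and there are $O(\log N)$ of them, and on the former a more careful H\"older (or grouping scales geometrically and noting $|I_j| \le N 2^{-j}$ forces the sum $\sum (2^j|I_j|)^{1/3}|I_j|^{1/3}$ to be dominated by $O(1)$ scales near $2^j \asymp N^{2/3}$-ish) yields the clean $N^{2/3}$. \emph{This last optimization — getting the constant clean without the logarithmic loss — is the main obstacle}, and the honest fix is Bombieri--Pila's original observation that one should choose the interval lengths adaptively so that each piece contributes $\asymp N^{2/3}/(\#\text{pieces})$, with the number of pieces bounded by the convexity (the slopes $f'$ on consecutive good pieces differ by a definite amount, bounding how many pieces there are).

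For the lower bound, the plan is Jarnik's explicit construction: take the set of integer points $P = \{(a,b) \in \Z^2_{>0} : \gcd(a,b)=1,\ \text{suitable size constraint}\}$, ordered by slope $b/a$, and form the convex polygonal path that, starting from the origin, successively appends the vectors $(a,b)$ in increasing order of slope. Because the slopes are strictly increasing, the resulting path is the boundary of a convex region and passes through $\gg (\#P)$ lattice points, namely all the partial-sum vertices. The size constraint is chosen so that the total displacement $\sum (a,b)$ lands inside $[0,N]^2$: since the primitive vectors with $\max(a,b) \le T$ number $\asymp T^2$ and their sum has magnitude $\asymp T^3$, taking $T \asymp N^{1/3}$ gives a path inside $[0,cN]^2$ for a constant $c$ (rescale), with $\asymp T^2 \asymp N^{2/3}$ vertices. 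The final step is to replace the polygonal path by a genuine strictly convex $C^\infty$ (or merely strictly convex) function: smooth the corners slightly, or simply observe that a strictly convex function may be taken piecewise linear with strictly increasing slopes if one is willing to perturb — but to meet the stated hypothesis one rounds each corner by an arbitrarily small circular arc, preserving strict convexity and all the lattice-point vertices. The main thing to check here is the magnitude estimate $\sum_{\gcd(a,b)=1,\ \max(a,b)\le T}(a+b) \asymp T^3$, which follows from $\sum_{\max(a,b)\le T} 1 \asymp T^2$ together with the density $6/\pi^2$ of coprime pairs and partial summation; this is routine.
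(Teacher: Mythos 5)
Your lower bound construction matches the paper's essentially line for line: primitive vectors of bounded size ordered by slope, forming a convex polygonal path with $\asymp H^2$ vertices inside a box of side $\asymp H^3$, then $H \asymp N^{1/3}$ and rounding the corners. That half is fine.

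For the upper bound there is a genuine gap, and also a mismatch of tool to task. Strict convexity does not imply differentiability, let alone membership in $C^{2}$, so the hypotheses of Lemma~\ref{lem:ptspacing} and Lemma~\ref{lem:bp} are not met for $\mathcal{M} = \{1,x,y\}$, $D=3$; your workaround via a piecewise-linear interpolant is never set up in a way that the determinant lemmas actually apply (a piecewise-linear function is not $C^2$ either). More importantly, even granting some version of the determinant bound, your own accounting leaves a $(\log N)^{1/3}$ loss that you flag but do not remove; the proposed fix (``adaptive interval lengths'') is not carried out, and it is not the route the paper takes. The paper's argument is entirely elementary and avoids the determinant method: if $n_1 < \cdots < n_t$ are the abscissas of the lattice points and $a_i = n_{i+1}-n_i$, $b_i = f(n_{i+1})-f(n_i)$, then $\sum a_i \le N$ and $\sum \lvert b_i\rvert \le N$ force at least $t/2$ of the indices to satisfy $\max(a_i,\lvert b_i\rvert) \le 4N/t$, while strict convexity makes the secant slopes $b_i/a_i$ strictly increasing, hence the pairs $(a_i,b_i)$ are all distinct; so $t/2 \le (4N/t)^2$, that is $t \ll N^{2/3}$, with no logarithm and no smoothness. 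The ``distinct consecutive-gap vectors, then pigeonhole'' observation is the idea you were missing. (The determinant method does indeed beat $N^{2/3}$, but only under extra smoothness, as in the final theorem of this section; that is a separate, stronger result.)
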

In particular, the implied constants are absolute and do not depend on $f$.
\begin{proof}
We first prove the upper bound. Suppose there are $0\leq n_1<\cdots <n_t\leq N$ such that $f(n_i)\in \mathbb{Z}$ for all $1\leq i\leq t$. Let $a_i=n_{i+1}-n_i$ and $b_i=f(n_{i+1})-f(n_i)$. Since $\sum_i a_i\leq N$ there are at least $\frac{3}{4}t$ many indices $i$ such that $a_i \leq 4N/t$, and similarly there are at least $\frac{3}{4}t$ many indices $i$ such that $b_i\leq 4N/t$. It follows there are at least $t/2$ many indices $i$ such that $\max(a_i,b_i)\leq 4N/t$. We note, however, that each $n_i$ gives rise to a distinct pair $(a_i,b_i)$, since by strict convexity we have, if $n_i<n_j$,
\[\frac{b_i}{a_i}=\frac{f(n_{i+1})-f(n_i)}{n_{i+1}-n_i}< \frac{f(n_{j+1})-f(n_j)}{n_{j+1}-n_j}=\frac{b_j}{a_j}.\]
Thus $t/2 \le \#\{(a_i,b_i) : \max(a_i,b_i)\leq 4N/t\} \le (4N/t)^2$, and hence $t\ll N^{2/3}$ as claimed.

We now construct the function for the lower bound. Let $H$ be some parameter to be chosen later. We consider all integer vectors $\mathbf{v}=(q,a)$ with $\mathrm{gcd}(a,q)=1$ and $1\leq a,q\leq H$, and order them as $\mathbf{v}_1,\ldots,\mathbf{v}_t$ such that $a_i/q_i<a_{i+1}/q_{i+1}$. Let $A_i=\sum_{1\leq j\leq i}a_j$ and $Q_i=\sum_{1\leq j\leq i}q_j$. Let $\tilde{f}:[0,Q_t]\to [0,A_t]$ be the piecewise linear function connecting the points $(Q_i,A_i)$ for $0\leq i\leq t$. Notice that 
\[Q_t=\sum_{1\le j\le t}q_j=\sum_{1\le q\le H}\sum_{\substack{1\leq a\leq H\\ (a,q)=1}}q \leq H^3,\]
(by symmetry, $A_t=Q_t\le H^3$) and the number of integer points on the graph of $\tilde{f}$ is
\[\geq t= \sum_{\substack{1\leq a,q\leq H\\ (a,q)=1}}1\gg H^2.\]
Since the gradient of the line segments is strictly increasing (the gradient between $(Q_i,A_i)$ and $(Q_{i+1},A_{i+1})$ is precisely $a_{i+1}/q_{i+1}$) the graph of $\tilde{f}$ is strictly convex if we consider only pairs of points on different line segments. We can make the entire graph strictly convex if we replace each line segment $\tilde f$ by a slight curve $f_\eps$. To make this explicit, one would compute $\tilde f(x)$ by linear interpolation through the endpoints $(Q_i, A_i)$ and $(Q_{i+1}, A_{i+1})$. Then for $\bar Q=\frac{Q_i + Q_{i+1}}{2}$, one defines $\tilde f_\eps (x)$ by Lagrange interpolation through the endpoints with the additional point $(\bar Q, \tilde f(\bar Q) + \eps)$. Each parabolic segment of $\tilde f_\eps$ now lies above its secant line, hence strictly convex, for $\eps=\eps_{N,\tilde f}>0$ sufficiently small.

This creates the graph of a strictly convex function $f:[0,H^3]\to [0,H^3]$ with $t\gg H^2$ many integer points $(Q_i,A_i)$. Setting $H=\lfloor N^{1/3}\rfloor$ completes the proof. 
\end{proof}

It may seem that this is the end of the story: we have lower and upper bounds of the same order of magnitude (and in fact Jarnik even gave refined bounds that match exactly up to lower order terms). Note, however, that Jarnik's construction of a strictly convex function with many integer points was piecemeal, and in particular was not smooth. Indeed, it is not even in $C^1$. One may hope, therefore, that the upper bound for the number of integral points can be improved granted additional smoothness hypotheses. 

Swinnerton-Dyer \cite{SD74} showed this is indeed true, proving an upper bound of $O_{f}(N^{3/5+o(1)})$ provided $f\in C^3$. A uniform (with no dependence on $f$) version of this result was proved by Schmidt \cite{Sc85}, under the additional assumption that $f^{(3)}\neq 0$ in $[0,N]$. Schmidt conjectured that the $3/5$ here could be improved to $1/2$ under the same assumptions.

In \cite{BP89} Bombieri and Pila gave, as another application of their determinant method, a proof that an upper bound of the strength $N^{1/2+o(1)}$ can be achieved provided $f$ is sufficiently smooth. As above, the example $f(x)=x^2$ shows that an exponent of $1/2$ is the best possible here. The proof is very similar to that of Theorem~\ref{thm:GNbound}, except that the assumption of strict convexity plays the role of B\'ezout's theorem, when bounding the number of integral points on the graph intersected with a line.
\begin{theorem}[Bombieri--Pila \cite{BP89}]
Let $d\geq 2$ and set $D=(d+1)(d+2)/2$. Let $f:[0,N]\to [0,N]$ be a strictly convex function. If $f\in C^D([0,N])$ and $f^{(D)}(x)\neq 0$ for all $x\in [0,N]$ then
\[ \lvert \{ (x,f(x)) : x \in [0,N]\}\cap \mathbb{Z}^2 \rvert \ll_d N^{\frac{1}{2}+\frac{8}{3(d+3)}+o(1)}.\]
\end{theorem}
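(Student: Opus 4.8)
The plan is to mimic the proof of Theorem~\ref{thm:GNbound}, with strict convexity replacing B\'ezout's theorem as the mechanism that bounds how many integer points can lie on any single auxiliary curve. First I would reduce to a local statement: since $f$ is strictly convex on $[0,N]$, its derivative $f'$ is strictly increasing, so I can partition $[0,N]$ into $O(1)$ subintervals on which $f'$ lies in a dyadic range $[T,2T]$, and after an affine change of variables (swapping the roles of $x$ and $y$ on the pieces where $|f'|>1$, which preserves strict convexity and the count of integer points) assume $|f'(x)|\le 1$ throughout. So it suffices to bound $|\{(x,f(x)):x\in I\}\cap\Z^2|$ for an interval $I$ of length $N$ with $|f'|\le 1$ and $f^{(D)}\ne 0$.

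Next I would set up the determinant machinery as in Lemma~\ref{lem-intbound1}, taking $\mathcal{M}$ to be \emph{all} monomials of degree $\le d$, so that $D=\tfrac12(d+1)(d+2)$ and $p=q=\tfrac13 dD$. The key replacement is this: a curve $G(x,y)=0$ with $G\in\langle\mathcal{M}\rangle$ has degree $\le d$, and intersects the strictly convex graph $\{(x,f(x))\}$ in at most $2d$ points --- indeed, if it met the graph in $2d+1$ points, then on some line $y=\alpha x+\beta$ through two consecutive intersection points the convex graph would lie strictly below, forcing a third intersection of $G$ with that line beyond B\'ezout... more simply, a strictly convex arc meets any algebraic curve of degree $d$ in $O(d)$ points because between consecutive intersections the difference $G(x,f(x))$ changes sign, and $G(x,f(x))$, while not polynomial, can be controlled: composing with the convexity, one shows the number of sign changes of $x\mapsto G(x,f(x))$ is $O(d^2)$ by a Rolle-type argument on the derivative identity (as in Lemma~\ref{lem:5}) applied to $G$ rather than $F$. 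I would cite this as the convex analogue of Corollary~\ref{cor:Bzout}, giving a bound $O(d^2)$ points per curve.

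With that in hand, the rest parallels Theorem~\ref{thm:GNbound} exactly. Using Lemma~\ref{lem:6} (with $F$ replaced by any polynomial vanishing on a suitable piece --- or, since we have no such $F$, using the cruder but sufficient fact that strict convexity lets us bound solutions of $f^{(i)}(x)=c$ directly: $f''>0$ implies $f^{(i)}$ is monotone-ish on $O(1)$ pieces only for $i=2$, so here I would instead invoke the hypothesis $f^{(D)}\ne0$ together with convexity to partition $I$ into $O_d(1)$ subintervals on which each $|f^{(i)}/i!|$ compares to $N\delta^i$ one way or the other), I split into subintervals; on the "small derivative" subintervals I apply Lemma~\ref{lem:bp} with the bound $O(d^2)$ points per curve to get
\begin{align*}
|\{(x,f(x)):x\in I_\nu\}\cap\Z^2| \ll d^2\Big(\delta|I_\nu|\big((2N)^p(DX)^q\big)^{1/\binom{D}{2}}+1\Big),
\end{align*}
and on the "large derivative" subintervals Lemma~\ref{lem:7} shows $|I_\nu|\le 2/\delta$, handled trivially. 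Here $X=N$ (since $|f|\le N$, $|f'|\le1$ and the higher derivatives are controlled on each piece), and $\tfrac{p+q}{\binom{D}{2}}=\tfrac{2dD/3}{\binom{D}{2}}\sim \tfrac{4}{3d}$, so each curve carries $\ll N^{4/(3d)+o(1)}$ of the $\delta|I|$ total --- optimising $\delta$ as in the recursion $G(N)\le K^{O_d(1)}N^{4/(3d)+o(1)}+K\,G(K^{-c}N)$ and iterating yields the exponent $\tfrac12+\tfrac{8}{3(d+3)}+o(1)$ after accounting for the affine-reduction losing a factor and the exact arithmetic $\tfrac{4}{3(d+1)}+\tfrac12\cdot(\text{something})$; I would track the constants carefully to land on precisely $\tfrac{8}{3(d+3)}$.

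The main obstacle I expect is the convex analogue of B\'ezout, i.e.\ rigorously bounding the number of integer points of a strictly convex graph on a single algebraic curve of degree $\le d$ \emph{uniformly} --- the naive "two points determine a line, convexity forbids a third" argument only handles lines ($d=1$), and for higher-degree curves one genuinely needs either the Rolle/derivative-identity argument of the Lemma~\ref{lem:5} type (which requires that the curve does not contain the graph, hence that we chose $\mathcal{M}$ to avoid divisibility --- but here there is no defining $F$, so instead one argues that the graph of a strictly convex function is not an algebraic curve unless it is a conic, and the case where the auxiliary curve is a conic tangent to the graph must be treated separately), or a more hands-on convex-geometry count of sign changes. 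Getting this uniform constant right, and correctly propagating it (and the affine-reduction overhead) through the optimisation to extract the precise exponent $\tfrac12+\tfrac{8}{3(d+3)}$ rather than merely $\tfrac12+O(1/d)$, is where the real work lies.
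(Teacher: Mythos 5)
The crux of the proof, which you have correctly identified as the sticking point, is bounding the number of integer points of the strictly convex graph lying on a single auxiliary curve $G\in\langle\mathcal{M}\rangle$ of degree $\leq d$. But the resolution is not a ``convex B\'ezout'' bound of $O(d)$ or $O(d^2)$ per curve: no such uniform bound exists once $d\geq 2$. Take $f(x)=x^2$ and $G(x,y)=y-x^2$: the auxiliary curve coincides with the graph and carries $\asymp N$ integer points in $[0,N]^2$. More to the point, if the per-curve count \emph{were} $O_d(1)$, feeding that into your recursion would give $G(N)\ll N^{8/(3(d+3))+o(1)}$, a much stronger bound than the theorem actually asserts --- the $\frac12$ in the exponent of the statement should itself have flagged that the per-curve contribution cannot be bounded away from a power of $N$.

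What the paper does instead is more modest, and it is exactly the step your sketch never supplies. Strict convexity is used \emph{only for lines}: a line meets a strictly convex graph in at most two points. For each irreducible component of $G$ of degree at least two, the paper invokes Theorem~\ref{thm:bp} itself (already proved at that stage), so that such a component carries at most $N^{1/2+o(1)}$ integer points in all of $[0,N]^2$, and therefore a fortiori at most that many on the graph. Hence each auxiliary curve of degree $\leq d$ contributes $\ll d\,N^{1/2+o(1)}$ integer points, which combined with the $\ll \delta|I_\nu|N^{8/(3(d+3))}+1$ curve count and the $|I_\nu|\leq 2/\delta$ bound of Lemma~\ref{lem:7} on the bad subintervals gives the recursion $G(N)\leq K^{O(d)}N^{\frac12+\frac{8}{3(d+3)}}+KG(K^{-2d}N)$, whence the result. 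Two smaller remarks: your preliminary dyadic decomposition of $f'$ and affine change of variables is unnecessary, since Lemmas~\ref{lem:bp} and \ref{lem:7} are applied with $X=N$ and do not need $|f'|\leq1$ (that restriction in Theorem~\ref{thm:GNbound} served to set up the implicit function theorem reduction, which has no analogue here); and $\frac{p+q}{\binom{D}{2}}=\frac{4d}{3(D-1)}=\frac{8}{3(d+3)}\sim\frac{8}{3d}$, not $\frac{4}{3d}$ as you wrote, though this slip did not propagate to your final exponent.
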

In particular, the implied constant depends only on $d$ but not on $f$.
\begin{proof}
For any real $N>0$ define
\[
G(N):= \sup_{|I|\le N} \sup_{f}\left\lvert \big\{(x,f(x)) : x \in I \big\}\cap \Z^2 \right\rvert,\]
where the second supremum is over all strictly convex functions $f:I\to [0,N]$ such that $f\in C^D(I)$. We shall prove that there exists some $K=K(d)$ satisfying $2\leq K\leq d^{O(1)}$ such that for any $N>0$ we have
\begin{equation}\label{eq-recur2}
G(N) \leq K^{O(d)} N^{\frac{1}{2}+\frac{8}{3(d+3)}} + KG(K^{-2d}N).
\end{equation}
This implies the result by an identical argument to that in the proof of Theorem~\ref{thm:GNbound}. To prove \eqref{eq-recur2}, as in the proof of Theorem~\ref{thm:GNbound}, we let $\delta\geq 1/N$ be some parameter to be chosen later and want to divide $[0,N]$ into $d^{O(1)}$ many subintervals $I_\nu$ such that, for each $I_\nu$ and each $1\leq i<D$, either 
\begin{align*}
{\rm (i)} & &\Big|\frac{f^{(i)}(x)}{i!}\Big| \ &\le N\delta^i \qquad {\rm for \ all }\quad x\in I_\nu, \qquad\textrm{ or }\\
{\rm (ii)} & &\Big|\frac{f^{(i)}(x)}{i!}\Big| \ &\ge N\delta^i  \qquad {\rm for \ all }\quad x\in I_\nu.
\end{align*}
This time we do not have an assumption like $F(x,f)=0$ where $F$ has bounded degree, and so Lemma~\ref{lem:5} is not available. Instead, we note that the assumption that $f^{(D)}\neq 0$ directly implies $f^{(i)}(x)=c$ has at most $D-i$ solutions $x\in I$, for any $c\in \mathbb{R}$ and $1\leq i<D$. This can be used in place of Lemma~\ref{lem:5}, and hence such a subdivision can be found proceeding as in the proof of Lemma~\ref{lem:6}.

The length of an interval such that (ii) holds for some $1\leq i<D$ is, by Lemma~\ref{lem:7}, at most $2/\delta$. If (i) holds for all $1\leq i<D$, then by Lemma~\ref{lem:bp} we can cover the integer points from $I_\nu$ by 
\[\ll_d \delta \abs{I_\nu}N^{\frac{2dD}{3\binom{D}{2}}}+1=\delta \abs{I_\nu}N^{\frac{8}{3(d+3)}}+1\]
many integral curves of degree $\leq d$, where we choose $\mathcal{M}$ to be the set of all monomials of degree $\leq d$ (so that $D=\frac{1}{2}(d+1)(d+2)$ and $p=q=dD/3$). We now note that, by strict convexity, any line intersects the graph of $f(x)$ in at most 2 points. The number of integer points on any curve of degree $d\geq 2$ inside $[0,N]^2$ is, by Theorem~\ref{thm:bp}, at most $N^{1/2+o(1)}$. It follows that
\[G(N) \leq d^{O(1)}\delta \abs{I}N^{\frac{1}{2}+\frac{8}{3(d+3)}+o(1)}+G(2/\delta).\]
The conclusion now follows choosing $K=d^C$ and $\delta=K^{C'd}/N$ for some constants $C,C'$. 
\end{proof}

We close by restating the conjecture of Schmidt that asks for the same quality bound, while assuming a much weaker smoothness condition on $f$.
\begin{conjecture}[Schmidt \cite{Sc85}]
If $f:[0,N]\to [0,N]$ is a strictly convex function such that $f\in C^3([0,N])$ and $f^{(3)}(x)\neq 0$ for all $x\in [0,N]$ then
\[ \lvert \{ (x,f(x)) : x \in I\}\cap \mathbb{Z}^2 \rvert \ll N^{\frac{1}{2}+o(1)}.\]
\end{conjecture}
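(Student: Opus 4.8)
The plan is to follow the proof of Theorem~\ref{thm:GNbound} almost verbatim, with strict convexity together with the (now proven) Theorem~\ref{thm:bp} taking over the role played there by B\'ezout's theorem. I would set
\[G(N) := \sup_{|I|\le N}\ \sup_{f}\ \bigl\lvert\{(x,f(x)):x\in I\}\cap\Z^2\bigr\rvert,\]
where the inner supremum runs over strictly convex $f\in C^D(I)$ with $f(I)\subseteq[0,N]$, and aim for a self-improving recursion
\[G(N)\ \le\ K^{O(d)}\,N^{\frac12+\frac{8}{3(d+3)}+o(1)}\ +\ K\,G(K^{-2d}N)\]
for some $K=K(d)$ with $2\le K\le d^{O(1)}$. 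Granted this, iterating exactly as in the proof of Theorem~\ref{thm:GNbound} (the logarithmic losses from the iteration and from each invocation of Theorem~\ref{thm:bp} being swallowed by the $N^{o(1)}$) yields the stated bound.

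To set up the recursion, fix a parameter $\delta\in[1/N,1)$ to be chosen at the end, and first partition $[0,N]$ into $d^{O(1)}$ subintervals $I_\nu$ on each of which, for every $1\le i<D$, either $\lvert f^{(i)}(x)/i!\rvert\le N\delta^i$ throughout $I_\nu$, or $\lvert f^{(i)}(x)/i!\rvert\ge N\delta^i$ throughout $I_\nu$. The one change from Lemma~\ref{lem:6} is that there is no bounded-degree relation $F(x,f)=0$ available, so Lemma~\ref{lem:5} is not at hand; in its place I would use the hypothesis $f^{(D)}\neq0$, which by repeated application of Rolle's theorem forces $f^{(i)}(x)=c$ to have at most $D-i$ solutions in $[0,N]$ for every constant $c$ and every $1\le i<D$. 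Summing over $i$ and the two signs $\pm N\delta^i$ bounds the number of breakpoints by $O(D^2)=d^{O(1)}$, and the subdivision proceeds just as in Lemma~\ref{lem:6}. On any subinterval where the second alternative holds for some $i$, Lemma~\ref{lem:7} bounds the length by $2/\delta$, so its contribution is at most $G(2/\delta)$.

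It remains to treat a subinterval $I_\nu$ on which $\lvert f^{(i)}(x)/i!\rvert\le N\delta^i$ for all $1\le i<D$ (and $\lvert f(x)\rvert\le N$, since $f$ maps into $[0,N]$). I would apply Lemma~\ref{lem:bp} with $\mathcal M$ the set of all monomials of degree $\le d$ (so $D=\tfrac12(d+1)(d+2)$ and $p=q=dD/3$) and with $X=N$; since $D-1=\tfrac12 d(d+3)$, the exponent $\tfrac{p+q}{\binom{D}{2}}$ works out to $\tfrac{8}{3(d+3)}$, so Lemma~\ref{lem:bp} covers the integer points of the graph over $I_\nu$ by $\ll_d \delta\lvert I_\nu\rvert\,N^{8/(3(d+3))}+1$ curves of degree $\le d$ (the remaining constants depending only on $d$). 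Now comes the point where convexity replaces B\'ezout: such a curve is the zero set of a polynomial $G$ of degree $\le d$, which factors into at most $d$ irreducible pieces of degree $\le d$; a linear factor meets the strictly convex graph of $f$ in at most two points, while an irreducible factor of degree $\ge2$ carries at most $(\log N)^{O(d)}N^{1/2}$ integer points of $[0,N]^2$ by Theorem~\ref{thm:bp}. Hence each covering curve accounts for $\ll_d N^{1/2+o(1)}$ of the integer points of the graph over $I_\nu$. Summing over the $d^{O(1)}$ subintervals, using $\sum_\nu\lvert I_\nu\rvert\le N$ and $\delta\ge1/N$, gives
\[G(N)\ \ll_d\ \delta\,N^{\frac32+\frac{8}{3(d+3)}+o(1)}\ +\ d^{O(1)}\,G(2/\delta),\]
and choosing $K=d^{C}$ and $\delta=K^{C'd}/N$ for suitable constants $C,C'$ produces the recursion above.

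The step I expect to be the main obstacle is \emph{conceptual rather than computational}: in the polynomial setting B\'ezout's theorem played two roles, bounding the number of graph points on each covering curve and, via Lemma~\ref{lem:5}, making the derivative subdivision possible, and here these two functions must be carried out by separate means. For the first one must notice that the linear covering curves are genuinely dangerous — a line may contain as many as $N$ integer points, so Theorem~\ref{thm:bp} is useless for them — and that it is precisely strict convexity that rescues the argument, Theorem~\ref{thm:bp} (legitimately, as it is by now a completed result, so there is no circularity) handling only the curves of degree $\ge 2$. For the second one must verify that the subdivision survives when the only available input is the analytic condition $f^{(D)}\neq0$, which is exactly what the Rolle's-theorem count above provides.
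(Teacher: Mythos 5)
There is a fundamental mismatch between what you have written and what the statement actually asserts, and it cannot be repaired without new ideas: the statement you were given is Schmidt's \emph{conjecture}, which the paper explicitly flags as ``still wide open,'' and it assumes only that $f\in C^3$ with $f^{(3)}\neq 0$. Your argument, by contrast, is (up to minor bookkeeping) the paper's proof of the \emph{Bombieri--Pila} theorem on convex curves, which needs the far stronger hypotheses $f\in C^D([0,N])$ and $f^{(D)}\neq 0$ with $D=\tfrac12(d+1)(d+2)$. For each fixed $d$ your recursion delivers an exponent $\tfrac12+\tfrac{8}{3(d+3)}$, not $\tfrac12+o(1)$; to push the second term to $o(1)$ one must let $d\to\infty$, which forces $D\to\infty$ and hence demands unbounded smoothness of $f$, precisely what the conjecture does not grant.

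Concretely, two steps in your proposal use hypotheses that $f\in C^3$, $f^{(3)}\neq 0$ does not supply. First, your Rolle's-theorem replacement for Lemma~\ref{lem:5} needs $f^{(D)}\neq 0$ to bound the number of solutions of $f^{(i)}(x)=\pm N\delta^i$ for \emph{every} $1\le i<D$; with only $f^{(3)}\neq 0$ you can control $f^{(i)}=c$ for $i\le 3$, after which the argument collapses, and indeed for $i\geq 3$ the derivatives $f^{(i)}$ may not even exist. Second, the application of Lemma~\ref{lem:bp} (and underneath it the determinant bound of Lemma~\ref{prop:detdet}) requires control of $f^{(i)}/i!$ for all $i<D$, which is meaningless for a merely $C^3$ function once $D>3$ (already $D=6$ at $d=2$). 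The correct, honest conclusion of your argument is the last theorem in Section~\ref{convex} of the paper, with hypothesis $f\in C^D$ and conclusion $\ll_d N^{1/2+8/(3(d+3))+o(1)}$; Schmidt's conjecture, which asks for the \emph{same} conclusion in the \emph{limit} but under a fixed low smoothness hypothesis, is a genuinely different and much harder statement. (The paper's remark that Pila obtained such a bound under a $C^{105}$ plus Wronskian nonvanishing hypothesis, using a strengthened determinant method, is a measure of how far the known technology is from the $C^3$ case.)
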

While still very much open, Schmidt's conjecture was stated as a central motivation for Bombieri and Pila's work \cite{BP89}. Using a strengthening of the real-analytic determinant method presented here,
Pila \cite{Pi91} has proved 
that such a bound holds if $f^{(105)}$ exists and does not vanish, as well as nonvanishing determinant of a $3\times 3$ matrix involving the first five derivatives of $f$---this may be viewed as an `enhanced convexity' condition. 

\bibliographystyle{amsplain}

\begin{thebibliography}{10}

\bibitem{BCN23} G. Binyamini, R. Cluckers, D. Novikov,
{\it Bounds for rational points on algebraic curves and dimension growth}, IMRN 11 (2024),
9256--9265.

\bibitem{Bir57} B. J. Birch, {\it Homogeneous forms of odd degree in a large number of variables}, Mathematika, 4 (1957), 102--105.

\bibitem{BP89} E. Bombieri, J. Pila, {\it The number of integral points on arcs and ovals}, Duke Math. J. 59 (1989), 337--357.

\bibitem{BoB20} D. Bonolis, T. Browning, {\it Uniform bounds for rational points on hyperelliptic fibrations}, Ann. Sc. Norm. Super. Pisa Cl. Sci. 24 (2021), 173-204. 

\bibitem{BoP22} D. Bonolis, L. Pierce, {\it Application of a polynomial sieve: beyond separation of variables},  Alg. Number Th. 18 (2024), 1515--1556.

\bibitem{BHBS06} T. D. Browning, D. R. Heath-Brown, P. Salberger, {\it Counting rational points on algebraic varieties}, Duke Math. J., 132 (2006), 545--578.

\bibitem{BLT24} T. Browning, J. D. Lichtman, J. Ter\"av\"ainen, {\it Bounds on the exceptional set in the abc conjecture}, preprint (2024) {\tt arXiv:2410.12234}


\bibitem{CCDN20} W. Castryck, R. Cluckers, P. Dittmann, K. H. Nguyen,
{\it The dimension growth conjecture, polynomial in the degree and without logarithmic factors}, Alg. Number Th. 14 (2020),  2261--2294.

\bibitem{CDHNV23} R. Cluckers, P. D\`ebes, Y. I. Hendel, K. H. Nguyen, F. Vermeulen,
{\it Improvements on dimension growth results and effective Hilbert's irreducibility theorem}, preprint (2023) {\tt arXiv:2311.16871}

\bibitem{Fal83} G. Faltings, {\it Endlichkeitss\"atze f\"ur abelsche Variet\"aten \"uber Zahlk\"orpern (Finiteness theorems for abelian varieties over number fields)}. Invent. Math. (in German), 73 (1983), 349--366.

\bibitem{GIP24} R. Greenfeld, M. Iliopoulou, S. Peluse, {\it On integer distance sets}, preprint (2024) {\tt arXiv:2401.10821}

\bibitem{Gr87} M. Gromov, {\it Entropy, homology, and semi-algebraic geometry}, Ast\'{e}risque 145-146 (1987), 225--240.

\bibitem{Hau17} Summer School, {\it Decoupling and Polynomial Methods in Analysis} Organizers: S. Guo, D. O. Silva, C. Thiele, Hausdorff Center for Mathematics, Bonn (2017)

{\tt https://www.math.uni-bonn.de/people/thiele/workshop19/SS17Kopp.pdf}

\bibitem{HB02} D. R. Heath-Brown, {\it The density of rational points on curves and surfaces}, Ann. of Math. (2) 155 (2002), 553--595.

\bibitem{Ja26} V. Jarnik, {\it \"{U}ber die Gitterpunkte auf konvexen Curven}, Math. Z. 24 (1926), 500–-518.

\bibitem{Kn05} A. W. Knapp, {\it Basic Real Analysis}, Birkh\"auser, Boston, 2005.

\bibitem{Ma10} O. Marmon, {\it A generalization of the Bombieri--Pila determinant method}, Zap. Nauchn. Sem. S.-Peterburg. Otdel. Mat. Inst. Steklov. 377 (2010), 63--77.

\bibitem{Pi91} J. Pila, {\it Geometric postulation of a smooth function and the number of rational points}, Duke Math. J. 63 (1991), 449--463.

\bibitem{Pi95} J. Pila, {\it Density of integral and rational points on varieties}, Ast\'{e}risque 228 (1995), 183--187.

\bibitem{Pi96} J. Pila, {\it Density of integer points on plane algebraic curves}, IMRN  18 (1996), 903--912.


\bibitem{PW06} J. Pila, A. Wilkie, {\it The rational points of a definable set}, Duke Math. J. 133 (2006), 591--616.


\bibitem{Sa07} P. Salberger, {\it On the density of rational and integral points on algebraic varieties}, J. Reine Angew Math. 606 (2007), 123--147.

\bibitem{Sa15} P. Salberger, {\it Uniform bounds for rational points on cubic 
hypersurfaces}, In: London Math. Soc. Lecture Note Ser. {\it Arithmetic and geometry} 420 (2007), 401--421

\bibitem{Sa23} P. Salberger, {\it Counting rational points on projective varieties}, Proc. London Math. Soc. 126 (2023), 1092--1133.

\bibitem{Sc85} W. M. Schmidt, {\it Integer points on curves and surfaces}, Monatsh. Math. 99 (1985), 45--72.

\bibitem{Se89} J. -P. Serre, {\it Lectures on the Mordell--Weil Theorem}, Aspects of Mathematics, E15, Friedr. Vieweg \& Sohn, Braunschweig, 1989, Translated from the French and edited by Martin Brown from notes by Michel Waldschmidt.

\bibitem{Se92} J. -P. Serre, {\it Topics in Galois theory}, Research Notes in Mathematics, vol. 1, Jones and Bartlett Publishers, Boston, MA, 1992, Lecture notes prepared by Henri Damon, With a foreword by Darmon and the author.

\bibitem{SD74} H. P. F. Swinnerton-Dyer, {\it The Number of Lattice Points on a Convex Curve}, J. Number Theory 6 (1974), 128--135.

\bibitem{Wa15} M. Walsh, {\it Bounded rational points on curves}, IMRN 14 (2015), 5644--5658.

\bibitem{Yo87} Y. Yomdin, {\it $C^k$-resolution of semi-algebraic mappings}, Addendum to: Volume growth and entropy, Israel J. Math. 57 (1987), 301--317.

\end{thebibliography}

\end{document}